\newtheorem{theorem}{Theorem}[section]
\newtheorem*{thmA}{Theorem~A}
\newtheorem*{thmB}{Theorem~B}
\newtheorem*{thmC}{Theorem C}
\newtheorem*{thmD}{Theorem D}
\newtheorem*{thmE}{Theorem E}
\newtheorem*{thmF}{Theorem F}
\newtheorem{lem}[theorem]{Lemma}
\newtheorem{cor}[theorem]{Corollary}
\theoremstyle{remark}
\newcommand{\Aut}{{\mathrm {Aut}}}
\newcommand{\Out}{{\mathrm {Out}}}
\newcommand{\Centralizer}{{\mathbf {C}}}
\newcommand{\Center}{{\mathbf {Z}}}
\newcommand{\PSL}{{\mathrm {PSL}}}
\newcommand{\PSU}{{\mathrm {PSU}}}
\newcommand{\SL}{{\mathrm {SL}}}
\newcommand{\GL}{{\mathrm {GL}}}
\newcommand{\OO}{\mathbf{O}}
\newcommand{\Alt}{\mathbf{A}}
\newcommand{\Sym}{\mathbf{S}}
\newcommand{\Syl}{\rm{Syl}}
\newcommand{\Normalizer}{\mathbf{N}}
\newcommand{\Sol}{\rm{Sol}}
\newcommand{\Fit}{\mathbf{F}}
\newcommand{\Comp}{\mathbf{E}}
\theoremstyle{definition}
\begin{document}
\title[Conjugacy classes of $p$-elements]{Conjugacy classes of $p$-elements and normal $p$-complements}

\author[H. P. Tong-Viet]{Hung P. Tong-Viet}
\address{Department of Mathematical Sciences, Binghamton University, Binghamton, NY 13902-6000, USA}
\email{tongviet@math.binghamton.edu}


\subjclass[2000]{Primary 20E45; secondary 20D10, 20D20}



\begin{abstract}  
In this paper, we study the structure of finite groups with a large number of conjugacy classes of $p$-elements for some prime $p$. As consequences, we obtain some new criteria for the existence of normal $p$-complements in finite groups.

\end{abstract}

\maketitle

\section{Introduction}
Let $p$ be a prime. Let $G$ be a finite group and let $P$ be a Sylow $p$-subgroup of $G$. Denote by $k(G)$ and $k_p(G)$ the number of conjugacy classes of $G$ and the number of conjugacy classes of $p$-elements of $G$, respectively.   By Sylow's theorem, we can choose a complete set $\Gamma$ of representatives  for the conjugacy classes of $p$-elements of $G$ in such a way that $\Gamma\subseteq P.$ This yields that $k_p(G)\leq k(P)$. Also $k_p(G)\ge 2$ unless $G$ is a $p'$-group. Hence if $p$ divides $|G|$, then  $2\leq k_p(G)\leq k(P)\leq |P|.$
In \cite{KNST}, the authors study finite groups $G$ with $k_p(G)=2$. They show that the Sylow $p$-subgroup $P$ of such a group $G$ must be either elementary abelian or extra-special of order $p^3.$  In this paper, we will look at the case when $k_p(G)$ is large in comparison to $|P|$.

Recall that a finite group is said to be $p$-nilpotent if it has a normal $p$-complement. A classical result in group theory states that a finite group $G$ is $p$-nilpotent if and only if $P$ controls its own fusion in $G$. (See \cite[5.25]{Isaacs} and the definitions in Section \ref{sec2}). The latter condition is equivalent to $x^G\cap P=x^P$ for every $x\in P$ which is equivalent to the condition $k_p(G)=k(P)$. Thus  $G$ is $p$-nilpotent if and only if $k_p(G)=k(P)$.
If we assume that $k_p(G)=|P|$, then $k_p(G)=k(P)=|P|$; hence $G$ is $p$-nilpotent and has an abelian Sylow $p$-subgroup. So, we may ask whether $G$ is still $p$-nilpotent,  if $k_p(G)/|P|$ is close to $1$.

It turns out that the fraction $k_p(G)/|P|$ is related to the commuting probability $d(G)$ of a group $G$, which is defined to be the probability that two randomly chosen elements of $G$ commute. Gustafson \cite{Gus} shows that $d(G)=k(G)/|G|$. The invariant $d(G)$ is also called the commutativity degree of $G$.

Here is our first result for even prime.
 \begin{thmA}\label{thmA}

Let $G$ be a finite group and let $P$ be a Sylow $2$-subgroup of $G$. Then   $k_2(G)> |P|/2$ if and only if $G$ has a normal $2$-complement and $k(P)>|P|/2$.
\end{thmA}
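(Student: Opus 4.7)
The ``if'' direction is immediate from the classical fusion criterion quoted before the statement: if $G$ is $2$-nilpotent, then $k_2(G)=k(P)$, so the hypothesis $k(P)>|P|/2$ yields $k_2(G)>|P|/2$. For the ``only if'' direction it suffices to prove that $k_2(G)>|P|/2$ forces $G$ to be $2$-nilpotent, since then $k(P)\ge k_2(G)>|P|/2$ is automatic.

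I would argue this by induction on $|G|$ via a minimal counterexample. Set $N=O_{2'}(G)$. A standard application of the Schur-Zassenhaus theorem (to the coset $xN$ for each $2$-element $x$, inside $\langle x\rangle N$) shows that the projection $G\to G/N$ induces a bijection between the $G$-classes of $2$-elements of $G$ and the $(G/N)$-classes of $2$-elements of $G/N$; hence $k_2(G)=k_2(G/N)$. Since $|PN/N|=|P|$ and $G$ has a normal $2$-complement if and only if $G/N$ does, minimality forces $O_{2'}(G)=1$. Under this reduction, $G$ being $2$-nilpotent is equivalent to $G$ being a $2$-group, so the task becomes: show that $O_{2'}(G)=1$ together with $k_2(G)>|P|/2$ forces $G$ to be a $2$-group.

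With $O_{2'}(G)=1$ we have $F^{*}(G)=O_2(G)\,\Comp(G)$ and $C_G(F^{*}(G))\le F^{*}(G)$. I would split on whether $\Comp(G)$ is trivial. If $\Comp(G)=1$, then $G$ is $2$-constrained, and Glauberman's $Z^{*}$-theorem specializes to $Z^{*}(G)=\Center(G)$ in this setting, so every non-central involution $z$ must be $G$-conjugate to a distinct involution of $\Centralizer_G(z)$. An Alperin-style bookkeeping of such fusions against the counting inequality $k_2(G)>|P|/2$ should force every involution into $\Center(G)$; combined with $\Centralizer_G(O_2(G))\le O_2(G)$, this squeezes $G$ down to a $2$-group, a contradiction.

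The principal obstacle is the case $\Comp(G)\neq 1$, which I expect to require the classification of finite simple groups. The key uniform inequality to establish is
\[
k_2(A)\le |P_A|/2
\]
for every almost simple group $A$ with nonabelian simple socle of even order and $P_A\in\Syl_2(A)$. For groups of Lie type this should follow from standard lower bounds on $|P_A|$ together with Deligne--Lusztig counts of semisimple and unipotent $2$-classes; the alternating groups $\Alt_n$ and the sporadic groups admit direct verification. A short argument relating $k_2(G)$ to the $k_2$'s of the almost simple sections coming from the components of $\Comp(G)$ and the outer action $G/F^{*}(G)$ then contradicts the counterexample hypothesis and completes the induction.
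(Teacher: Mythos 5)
Your ``if'' direction and your opening reduction are fine and agree with the paper: $k_2(G)=k_2(G/\OO_{2'}(G))$ (the paper gets this from $d_\pi(G)\le d_\pi(G/N)d_\pi(N)$ rather than Schur--Zassenhaus, but your route works), so one may assume $\OO_{2'}(G)=1$ and must show $G$ is a $2$-group. The problem is everything after that: both branches of your case division on $\Comp(G)$ are only sketched, and the division itself is unnecessary. Note first that once $\OO_{2'}(G)=1$ you have $\Center_2^*(G)=\Center(G)$ unconditionally, not just when $G$ is $2$-constrained. The paper handles a possibly nontrivial center by first reducing to $G=\OO^2(G)$ (if $\Center(G)\ne 1$ it is then a normal Sylow $2$-subgroup and Burnside's theorem finishes), and then, with $\OO_{2'}(G)=\Center(G)=1$, applies the \emph{original} Glauberman $Z^*$-theorem in the form stated in Lemma \ref{lem2} --- for an arbitrary $2$-element $x\in P$, not just an involution: $x^G\cap P=\{x\}$ forces $x\in\Center_2^*(G)=1$. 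Hence every nontrivial class of $2$-elements meets $P$ in at least two elements, so choosing class representatives inside $P$ gives $|P|-1\ge 2(k_2(G)-1)$; since $|P|-1$ is odd this yields $k_2(G)\le |P|/2$, contradicting the hypothesis. No generalized Fitting subgroup analysis and no classification of finite simple groups is needed.

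Concretely, the gaps in your version are these. In the $\Comp(G)=1$ branch, you track only involutions, but $k_2(G)$ counts classes of all $2$-elements, so ``forcing every involution into $\Center(G)$'' would not by itself bound $k_2(G)$ (classes of elements of order $4$, $8$, \dots{} still contribute); and ``Alperin-style bookkeeping \dots{} should force'' is not an argument --- the actual counting needed is the one displayed above, and it never requires involutions to become central. In the $\Comp(G)\ne 1$ branch, the inequality $k_2(A)\le |P_A|/2$ for every almost simple $A$ is asserted rather than proved, and verifying it family-by-family via Deligne--Lusztig theory would be a substantial project; it is, moreover, exactly the statement that the uniform $Z^*$-counting delivers for free (it is essentially Lemma \ref{lem4} of the paper specialized to $\Center(G)=1$). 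The reduction from a general counterexample to its almost simple sections is also left unproved. So as written the hard direction is not established, and the CFSG-dependent detour should be replaced by the direct $Z^*$-theorem count.
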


Clearly,  any groups in Theorem A are solvable by applying Feit-Thompson theorem. Also, the Sylow $2$-subgroup $P$ in Theorem A  is nilpotent of class at most $2.$   (See Lemma \ref{lem: classification}). Theorem A  does not hold if we allow equality. For example, if $G=\Alt_4$ and $P\in\Syl_2(G)$, then $k_2(G)=2$ and $|P|=4$, so $k_2(G)=|P|/2$ but $G$ is not $2$-nilpotent. Also, we cannot replace $2$ by an odd prime. Indeed, if $G=\Alt_5$ and $P\in\Syl_3(G)$, then $k_3(G)=2$ and $|P|=3$; hence $k_3(G)=\frac{2}{3}|P|>\frac{1}{2}|P|$ but $G$ is not $3$-nilpotent.

In view of Lemma \ref{lem: quotients}, to investigate the structure of finite groups $G$ with $k_p(G)/|P|$ being a constant, we may assume that $\OO_{p'}(G)=1$.

 \begin{thmB} 
Let $G$ be a finite  group and let $P\in\Syl_2(G)$. Suppose that $\OO_{2'}(G)=1$ and $k_2(G)=|P|/2$. Then
\begin{enumerate}[$(1)$]
\item $G/\Center(G)\cong\Alt_4$ or $\Sym_4$; or
\item  $G/\Center(G)$ is an almost simple group with a non-abelian simple socle isomorphic to $\PSL_2(q)$ with $3<q\equiv 3,5\, (\text{mod}\: 8)$.
\end{enumerate}
\end{thmB}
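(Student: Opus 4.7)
The overall strategy is to first exclude the case that $G$ is $2$-nilpotent by a character-counting argument, then use Theorem~A together with Lemma~\ref{lem: classification} to constrain the Sylow $2$-subgroup, and finally analyse the generalised Fitting subgroup using input from the classification of finite simple groups to pin down $G/\Center(G)$.

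First, I would show $G$ is \emph{not} $2$-nilpotent. Otherwise the hypothesis $\OO_{2'}(G)=1$ would force $G=P$ and hence $k(P)=|P|/2$ for the $2$-group $P$. Let $n_i$ denote the number of irreducible characters of $P$ of degree $2^i$. Then $\sum_i n_i=k(P)=|P|/2$ and $\sum_i n_i\,2^{2i}=|P|$; combining these yields $n_0=2n_1+14n_2+62n_3+\cdots$ and hence $|P|/2=3(n_1+5n_2+21n_3+\cdots)$. The trivial character forces $n_0\ge 1$, so some $n_i$ with $i\ge 1$ is positive, making the right-hand side a positive multiple of $3$; but $|P|/2$ is a power of $2$, a contradiction. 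Therefore $G$ is not $2$-nilpotent, so Theorem~A yields the strict inequality $k(P)>|P|/2$, and Lemma~\ref{lem: classification} restricts $P$ to a short list of nilpotency-class-at-most-$2$ groups.

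Next, I would analyse $F^*(G)$. Since $\OO_{2'}(G)=1$, one has $F^*(G)=\OO_2(G)\cdot\Comp(G)$ and $\Centralizer_G(F^*(G))\le F^*(G)$, and I would split according to whether $\Comp(G)$ is trivial. If $\Comp(G)=1$, then $F^*(G)=\OO_2(G)$ and $G/\Center(\OO_2(G))$ embeds in $\Aut(\OO_2(G))$; the constraint $k_2(G)=|P|/2$ together with the restricted shape of $P$ forces $G/\OO_2(G)$ to be a $2'$-group of order $3$ or $6$ acting faithfully on a Klein-four quotient of $\OO_2(G)$, and a direct count (extending the sample computation for $\Alt_4$ in the introduction to allow an arbitrary central $2$-group) yields $G/\Center(G)\cong\Alt_4$ or $\Sym_4$. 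If instead $\Comp(G)\ne 1$, pick a component $L$: the conditions that a Sylow $2$-subgroup of $L/\Center(L)$ be compatible with Lemma~\ref{lem: classification} and that $k_2(L)\le k_2(G)=|P|/2$ together bound $|L|$. A sweep through the CFSG list then leaves only $L/\Center(L)\cong\PSL_2(q)$ with $3<q\equiv 3,5\pmod{8}$ (for which a Sylow $2$-subgroup of $L/\Center(L)$ has order $4$ and $k_2(L)=2$); uniqueness of the component and $\Centralizer_G(F^*(G))\le F^*(G)$ then force $G/\Center(G)$ to embed as an almost simple group with socle $\PSL_2(q)$.

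The main obstacle is the CFSG case analysis in the $\Comp(G)\ne 1$ branch. Three delicate points arise: (i) excluding $\PSL_2(q)$ for $q\equiv\pm 1\pmod 8$, where the Sylow $2$-subgroup is a larger dihedral group still potentially compatible with Lemma~\ref{lem: classification}, so that a careful count of fusion classes of involutions is required; (ii) ruling out the few small exceptional simple groups such as $\PSL_3(2)$ and $\Alt_6=\PSL_2(9)$ whose Sylow $2$-subgroups lie in the allowed list; and (iii) showing in this branch that $\OO_2(G)\le\Center(G)$, so that $G/\Center(G)$ is genuinely almost simple with no extra central $2$-layer.
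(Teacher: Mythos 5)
Your opening step (ruling out $2$-nilpotency by comparing $\sum n_i$ with $\sum n_i 4^i$) is correct and is a nice elementary substitute for the paper's appeal to Lemma~\ref{lem: classification}, and your overall split along $\Comp(G)$ parallels the paper's split into the solvable case and the case where the last term of the derived series is quasisimple. However, there is a genuine gap: the engine of the paper's proof is Glauberman's $Z^*$-theorem (Lemma~\ref{lem2}), packaged as Lemma~\ref{lem4}, which shows that when $\OO_{2'}(G)=\Center(G)=1$ every nontrivial $2$-element satisfies $|x^G\cap P|\ge 2$, hence $k_2(G)\le |P|/2$, and at equality $P$ is either elementary abelian of order $4$ or extraspecial with a completely determined fusion pattern ($|z^G\cap P|=3$ for the central involution and $|y^G\cap P|=2$ otherwise). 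Your proposal never produces this, and Lescot's classification alone (class at most $2$, $|P'|\le 2$) is far too weak to make your ``sweep through the CFSG list'' finite or even well-posed: you have no a priori bound on $|P|$ or on the number of classes of the putative component. The paper instead avoids CFSG entirely, using Lemma~\ref{lem4} to reduce to Walter's theorem (abelian Sylow $2$-subgroups) and Gilman--Gorenstein (class-two Sylow $2$-subgroups) in Lemma~\ref{lem: simple groups}.

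Two specific steps as written would fail. First, your bound $k_2(L)\le k_2(G)$ for a component $L$ is not valid: distinct $L$-classes of $2$-elements may fuse in $G$, so $k_2(L)$ can exceed the number of $G$-classes meeting $L$. The correct inequality goes the other way around via $d_2$: Lemma~\ref{lem1}(2) applied along the subnormal chain gives $d_2(L)\ge d_2(G)=1/2$, and then Lemma~\ref{lem4} forces $d_2(L/\Center(L))=1/2$ exactly, which is what feeds into Lemma~\ref{lem: simple groups}. Second, in the $\Comp(G)=1$ branch the assertion that $G/\OO_2(G)$ is ``a $2'$-group of order $3$ or $6$'' is unsupported (and self-contradictory for order $6$, as $\Sym_4$ shows); note also that $\Comp(G)=1$ with $C_G(\OO_2(G))\le\OO_2(G)$ does not by itself make $G$ solvable, since $\Aut(\OO_2(G))$ can contain nonsolvable subgroups. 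The paper's Lemmas~\ref{lem:2-class-solvable} and~\ref{lem: solvable groups} handle this branch by an induction with several sub-cases (abelian versus extraspecial $P$, the $\OO_{2,2'}$-tower, excluding $\SL_2(3)$, and identifying $\Centralizer_G(\OO^2(G))$ with $\Center(G)$); your ``direct count'' elides all of this. Finally, your acknowledged obstacle (iii), that $\OO_2(G)\le\Center(G)$ in the component case, is resolved in the paper by showing $d_2(\Centralizer_G(L))=1$ via Lemma~\ref{lem1}(2) and then a commutator argument using the normal $2$-complement of $G/L$; some such argument must be supplied.
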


Let $\pi$ be a set of primes. Let $k_\pi(G)$ be the number of conjugacy classes of $\pi$-elements of $G$. Let $|G|_\pi$ be the $\pi$-part of the order $|G|$ of $G$. Define $d_\pi(G)$ to be $k_\pi(G)/|G|_\pi$. If $\pi=\{p\}$, then we write $d_p(G)$  and $k_p(G)$ instead of $d_{\{p\}}(G)$ or $k_{\{p\}}(G)$.  
We now  investigate the structure of finite groups $G$ with $d_\pi(G)>1/2$, where $\pi$ is a set of primes containing $2$. 

\begin{thmC} Let $G$ be a finite group and let $\pi$ be a set of primes with $2\in\pi$. Let $\sigma=\pi\setminus \{2\}$. Suppose that $d_\pi(G)>1/2$. Then $G$ has a normal $\pi$-complement and an abelian Hall $\sigma$-subgroup.
\end{thmC}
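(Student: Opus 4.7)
The strategy is to extract a normal $2$-complement via Theorem~A and then handle the remaining $\sigma$-structure via an odd-order analogue. Every $\pi$-element $x \in G$ decomposes uniquely as $x = x_2 x_\sigma$ with commuting $2$-part $x_2$ and $\sigma$-part $x_\sigma$. Grouping $G$-classes of $\pi$-elements by the $2$-part, respectively the $\sigma$-part, yields the dual identities
\[
k_\pi(G) \;=\; \sum_{[u]} k_\sigma(\Centralizer_G(u)) \;=\; \sum_{[v]} k_2(\Centralizer_G(v)),
\]
where $[u]$ ranges over $G$-classes of $2$-elements and $[v]$ over $G$-classes of $\sigma$-elements. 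I also use the bound $k_\tau(C) \leq |C|_\tau$ for any finite group $C$ and any set of primes $\tau$; this follows by induction on $|\tau|$ from the base case $k_p(C) \leq k(P) \leq |P|$ (for $P$ a Sylow $p$-subgroup) combined with the same decomposition.

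Applying $k_\sigma(\Centralizer_G(u)) \leq |G|_\sigma$ to each term of the first sum gives
\[
k_2(G) \cdot |G|_\sigma \;\geq\; k_\pi(G) \;>\; \tfrac{1}{2}|G|_\pi \;=\; \tfrac{1}{2}|P|\cdot|G|_\sigma,
\]
so $k_2(G) > |P|/2$, and Theorem~A forces $G$ to be $2$-nilpotent. Write $G = P \ltimes N$ with $N = \OO_{2'}(G)$. Every $\sigma$-element of $G$ lies in $N$ (odd order), and for any such $v$ the centralizer $\Centralizer_G(v)$ is $2$-nilpotent: $\Centralizer_N(v) \trianglelefteq \Centralizer_G(v)$ with $2$-group quotient $\Centralizer_G(v)/\Centralizer_N(v) \hookrightarrow G/N \cong P$. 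Hence $k_2(\Centralizer_G(v))$ equals the class number of a Sylow $2$-subgroup of $\Centralizer_G(v)$, which is at most $|P|$. Substituting into the second sum and using that each $G$-class of $\sigma$-elements is a disjoint union of $N$-classes, we obtain $k_\pi(G) \leq |P|\cdot k_\sigma(G) \leq |P| \cdot k_\sigma(N)$, whence $k_\sigma(N) > |N|_\sigma/2$.

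The main obstacle is then the following odd-order analogue of Theorem~A: \emph{if $N$ has odd order and $k_\sigma(N) > |N|_\sigma/2$, then $N$ is $\sigma$-nilpotent with abelian Hall $\sigma$-subgroup.} I plan to prove this by induction on $|N|$. A quotient lemma in the spirit of Lemma~\ref{lem: quotients} lets us assume $\OO_{\sigma'}(N) = 1$, after which Feit--Thompson solvability forces $\Fit(N) = \OO_\sigma(N)$ to be self-centralizing in $N$. Applying the same counting trick to the $q$-part of a $\sigma$-element yields $k_q(N) > |N|_q/2$ for each $q \in \sigma$, and an analogue of Theorem~A for a single odd prime in the odd-order setting---which escapes the $\Alt_5$ counterexample cited after Theorem~A precisely because $|\Alt_5|$ is even---then shows $N$ is $q$-nilpotent for every $q \in \sigma$. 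Intersecting the normal $q$-complements produces a normal Hall $\sigma'$-subgroup of $N$, so $N$ is $\sigma$-nilpotent; for the Hall $\sigma$-subgroup $H$ we have $k(H) = k_\sigma(N) > |H|/2$, and since $|H|$ is odd while any non-abelian group of odd order has commuting probability at most $11/27 < 1/2$, $H$ must be abelian. Finally, $\OO_{\sigma'}(N)$ is characteristic in $N$ hence normal in $G$, and has the correct order $|G|_{\pi'}$ to be the normal $\pi$-complement of $G$; a Hall $\sigma$-subgroup of $N$ is simultaneously a Hall $\sigma$-subgroup of $G$, completing the proof.
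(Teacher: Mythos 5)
Your overall architecture matches the paper's: pass to the prime $2$ to invoke Theorem~A, then push all remaining work into an odd-order statement about $\sigma$. Your counting reductions are correct --- the identity $k_\pi(G)=\sum_{[u]}k_\sigma(\Centralizer_G(u))$ together with $k_\tau(C)\le |C|_\tau$ does give $k_2(G)>|P|/2$, and the dual sum does give $k_\sigma(N)>|N|_\sigma/2$ for the normal $2$-complement $N$ --- although the paper obtains these same two inequalities in one line each from Lemma~\ref{lem1}(1) and Lemma~\ref{lem1}(2).

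The gap is in the step you yourself flag as the main obstacle. You reduce the odd-order $\sigma$-statement to the claim that \emph{if $N$ has odd order and $d_q(N)>1/2$ for an odd prime $q$, then $N$ is $q$-nilpotent}, and then you simply assert this as ``an analogue of Theorem~A.'' Nothing you cite delivers it: Theorem~A is specific to $p=2$ (its proof runs through Glauberman's $Z^*$-theorem via Lemma~\ref{lem4}), and Theorem~D for odd $p$ requires the strictly larger threshold $(p+1)/(2p)>1/2$, which your bound does not meet (for $q=3$ you would need $d_3>2/3$). Observing that the $\Alt_5$ counterexample has even order explains why an odd-order hypothesis is plausible, but it is not a proof, and this single-prime claim is exactly where the real work of the paper's Lemma~\ref{lem:odd order} lives: there one takes a minimal counterexample, uses Feit--Thompson solvability and the self-centralizing Fitting subgroup (much as you set up) to reduce to a Frobenius group $C_p^k\rtimes C_r$ with $r\notin\sigma$, and checks by direct computation that $d_p=1/r+(r-1)/(rp^k)<1/2$ in every case. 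Your proposal stops just short of this endgame, so as written it assumes the hardest ingredient rather than proving it. The claim is true --- once $\OO_{q'}(N)=1$ one gets $Q:=\OO_q(N)$ abelian and self-centralizing, and counting orbits of the faithful coprime action of the odd-order $q'$-group $N/Q$ on $Q$ (all nontrivial orbits have size at least $3$) yields the contradiction --- but some such argument must actually appear for the proof to be complete.
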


We should point out that our proofs of Theorems A$-$C do not depend on the classification of finite simple groups. 

For odd primes $p$, we obtain the following result, unfortunately, our proof depends on the odd version of Glauberman $Z^*$-theorem and thus depends on the classification of finite simple groups.
\begin{thmD} Let $G$ be a finite group and let $p$ be an odd prime. Then $d_p(G)>(p+1)/(2p)$ if and only if  $G$ has a normal $p$-complement and an abelian  Sylow $p$-subgroup.
\end{thmD}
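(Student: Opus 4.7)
The backward direction is immediate: if $G$ is $p$-nilpotent with abelian Sylow $p$-subgroup $P$, fusion is trivial, so $k_p(G) = k(P) = |P|$ and $d_p(G) = 1 > (p+1)/(2p)$. For the forward direction, my plan is to force $P$ to be abelian from the class equation, and then to force $p$-nilpotency via Burnside's fusion theorem.

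\textbf{Step 1 (abelianness of $P$).} For any $p$-group $P$, the class equation gives $k(P) \le |\Center(P)| + (|P|-|\Center(P)|)/p$. If $P$ is non-abelian then $P/\Center(P)$ is non-cyclic, so $|\Center(P)| \le |P|/p^2$, yielding the classical bound $k(P)/|P| \le (p^2+p-1)/p^3$. A direct check shows $(p^2+p-1)/p^3 < (p+1)/(2p)$ for every odd $p$, since this amounts to $(p-1)(p^2-2) > 0$. Because $k_p(G) \le k(P)$, the hypothesis $d_p(G) > (p+1)/(2p)$ forces $P$ to be abelian.

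\textbf{Step 2 ($p$-nilpotency).} With $P$ abelian, Burnside's fusion theorem implies that $\Normalizer_G(P)$ controls $G$-fusion in $P$, so $k_p(G)$ equals the number of orbits of $W := \Normalizer_G(P)/\Centralizer_G(P) \le \Aut(P)$ on $P$. By Cauchy--Frobenius, together with the bound $|\Centralizer_P(\sigma)| \le |P|/p$ for each $\sigma \ne 1$ in $W$ (valid since $W$ acts faithfully on the $p$-group $P$, so $\Centralizer_P(\sigma)$ is proper in $P$),
\[
\frac{k_p(G)}{|P|} \;\le\; \frac{p + |W| - 1}{p\,|W|}.
\]
The right-hand side is strictly decreasing in $|W|$ and equals $(p+1)/(2p)$ at $|W| = 2$. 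Hence the strict hypothesis forces $|W| = 1$, i.e., $\Normalizer_G(P) = \Centralizer_G(P)$, and Burnside's normal $p$-complement theorem delivers a normal $p$-complement.

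\textbf{Main obstacle.} The threshold is sharp at $G = \Alt_5 = \PSL_2(5)$ for both $p = 3$ and $p = 5$, where the Sylow $p$-subgroup is cyclic of order $p$ and $W$ acts by inversion; this confirms that the counting bound in Step 2 is genuinely tight and that the hypothesis must be strict. The delicate numerical point is the coexistence of two thresholds, $(p^2+p-1)/p^3$ from the non-abelian-$p$-group side and $(p+1)/(2p)$ from the orbit count, both of which must sit below the hypothesis: the former fails at $p=2$ (where $5/8 > 1/2$), which cleanly explains why Theorem A at the $1/2$ threshold needs an entirely different analysis. As sketched, this plan does not appear to invoke the odd Glauberman $Z^*$-theorem; if that tool is genuinely required, the gap would most plausibly appear in Step 1 --- for instance, if the non-abelian case could not be excluded purely by counting and required local analysis inside $G$ --- and that is the step I would scrutinise most carefully before trusting this approach.
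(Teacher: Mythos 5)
Your proof is correct, and the forward direction takes a genuinely different and substantially more elementary route than the paper's. Step 1 coincides with the paper's Lemma \ref{lem:p-groups} (the paper uses the Guralnick--Robinson bound $d(P)<(p+1)/p^2$ for non-abelian $p$-groups rather than your slightly sharper $(p^2+p-1)/p^3$, but both sit below $(p+1)/(2p)$ exactly when $p$ is odd). The real divergence is Step 2. The paper establishes $p$-nilpotency by an induction on $|G|$: it reduces to $\OO_{p'}(G)=1$ and $G=\OO^p(G)$, proves $G$ is $p$-solvable by passing to a non-abelian chief factor and invoking the Glauberman $Z_p^*$-theorem (hence the classification of finite simple groups), and then applies Hall--Higman and a final class-counting step. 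Your argument replaces all of this with Burnside's fusion lemma (the paper's Lemma \ref{lem: fusion}(1), applicable since $P\leq\Centralizer_G(P)$), which identifies $k_p(G)$ with the number of orbits of $W=\Normalizer_G(P)/\Centralizer_G(P)$ on $P$, followed by Cauchy--Frobenius with the fixed-point bound $|\Centralizer_P(\sigma)|\leq |P|/p$ for $1\neq\sigma\in W$; the resulting inequality $d_p(G)\leq (p+|W|-1)/(p|W|)$ forces $|W|=1$, and Burnside's normal $p$-complement theorem (Lemma \ref{lem: fusion}(2)) finishes. Each step checks out: $W$ acts faithfully, fixed points form a proper subgroup hence of index at least $p$, the bound is decreasing in $|W|$ and equals $(p+1)/(2p)$ at $|W|=2$, and the examples $\Alt_5$ (for $p=3,5$) and $\textrm{D}_{2p}$ confirm tightness at $|W|=2$. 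What your approach buys is significant: it is classification-free, which the paper explicitly regrets not achieving ("unfortunately, our proof depends on the odd version of Glauberman $Z^*$-theorem"). The germ of your counting already appears in the paper's proof of Theorem F, where $d_p(\Normalizer_G(P))=d_p(G)$ is used for abelian $P$; your Cauchy--Frobenius refinement is what makes the threshold $(p+1)/(2p)$ fall out directly. Your closing worry about a hidden need for local analysis in Step 1 is unfounded: the exclusion of non-abelian $P$ is purely a counting statement about $d(P)$, exactly as in the paper.
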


This bound cannot be improved since $d_p(\textrm{D}_{2p})=(p+1)/(2p)$ but $\textrm{D}_{2p}$ is not $p$-nilpotent, where $p$ is an odd prime. For non-solvable examples, let $f\ge 2$ be an integer and $p$ be a prime such that $4^f-1$ is divisible by $p$ but not by $p^2$. Then $d_p(\PSL_2(2^f))=(p+1)/(2p).$

\begin{thmE} Let $G$ be a finite group and let $\pi$ be a set of odd primes. Let $p$ be the smallest prime in $\pi$. Suppose that $d_\pi(G)>(p+1)/(2p)$. Then $G$ has a normal $\pi$-complement and an abelian Hall $\pi$-subgroup.
\end{thmE}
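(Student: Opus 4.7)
The idea is to reduce Theorem~E to Theorem~D one prime at a time. I would show that the single hypothesis $d_\pi(G)>(p+1)/(2p)$ already forces $d_q(G)>(q+1)/(2q)$ for every $q\in\pi$; Theorem~D then furnishes a normal $q$-complement and an abelian Sylow $q$-subgroup for each such $q$, and these can be assembled into a normal $\pi$-complement with abelian Hall $\pi$-subgroup.

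The crux is the monotonicity $d_q(G)\geq d_\pi(G)$ for each $q\in\pi$. To prove it, fix $q\in\pi$ and use the unique primary decomposition $x=x_q x_{q'}$ of a $\pi$-element into its commuting $q$-part and $(\pi\setminus\{q\})$-part. A standard uniqueness argument (the $q$-part is a power of $x$, so conjugation respects the decomposition) yields
\[
k_\pi(G)=\sum_{[y]} k_{\pi\setminus\{q\}}\bigl(\Centralizer_G(y)\bigr),
\]
where $[y]$ runs over the $G$-conjugacy classes of $q$-elements. I would then establish the auxiliary bound $k_\tau(H)\leq |H|_\tau$, valid for any finite group $H$ and any set of primes $\tau$, by induction on $|\tau|$: the base case $|\tau|=1$ follows since every $q$-element of $H$ lies in some conjugate of a Sylow $q$-subgroup, and the inductive step uses exactly the same centralizer decomposition applied inside $H$. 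Feeding this into the displayed identity gives $k_\pi(G)\leq k_q(G)\cdot |G|_{\pi\setminus\{q\}}$, so dividing by $|G|_\pi=|G|_q\cdot|G|_{\pi\setminus\{q\}}$ yields $d_\pi(G)\leq d_q(G)$. Since $r\mapsto (r+1)/(2r)=1/2+1/(2r)$ is decreasing and $p$ is the smallest prime of $\pi$, we conclude $d_q(G)>(q+1)/(2q)$ for every $q\in\pi$, and Theorem~D applies prime by prime.

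To finish, set $N=\bigcap_{q\in\pi}\OO_{q'}(G)$. By Theorem~D each $G/\OO_{q'}(G)$ is an abelian $q$-group, so the diagonal map exhibits $G/N$ as a subgroup of the abelian product $\prod_{q\in\pi} G/\OO_{q'}(G)$, hence $G/N$ is abelian of order dividing $|G|_\pi$; since $N$ itself is a $\pi'$-subgroup, a size count forces $|G/N|=|G|_\pi$, so $N$ is a normal $\pi$-complement, and Schur--Zassenhaus supplies an abelian Hall $\pi$-subgroup isomorphic to $G/N$. The main technical point I expect to dwell on is the auxiliary bound $k_\tau(H)\leq |H|_\tau$, which must be established without assuming that Hall $\tau$-subgroups exist in the various centralizers $\Centralizer_G(y)$; the inductive centralizer argument above is precisely what lets one avoid that assumption and what makes the whole reduction go through.
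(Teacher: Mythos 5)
Your proposal is correct, and its skeleton is the same as the paper's: deduce $d_q(G)\ge d_\pi(G)>(q+1)/(2q)$ for every $q\in\pi$ from the fact that $r\mapsto(r+1)/(2r)$ is decreasing, and then invoke Theorem D prime by prime. The differences lie in the two supporting steps. First, the monotonicity $d_\pi(G)\le d_q(G)$ is exactly Lemma \ref{lem1}(1) of the paper, which is quoted from Mar\'oti--Nguyen rather than proved; your centralizer/primary-decomposition identity $k_\pi(G)=\sum_{[y]}k_{\pi\setminus\{q\}}(\Centralizer_G(y))$ together with the auxiliary bound $k_\tau(H)\le|H|_\tau$ is a correct self-contained derivation of that cited lemma, so you have re-proved an ingredient rather than changed the argument. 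Second, the endgame differs: the paper first assembles the normal $q$-complements into a normal $\pi$-complement, obtains a Hall $\pi$-subgroup $H$ from $\pi$-separability, and then shows $d(H)>(p+1)/(2p)>1/p\ge 1/r$ so that Lemma \ref{lem1}(4) forces every Sylow subgroup of $H$ to be normal, whence $H$ is nilpotent with abelian Sylow subgroups and therefore abelian; you instead note that $G/N$ with $N=\bigcap_{q\in\pi}\OO_{q'}(G)$ embeds diagonally into the abelian group $\prod_{q\in\pi}G/\OO_{q'}(G)$, which gives the normal $\pi$-complement and the abelianness of $G/N\cong H$ in one stroke, with Schur--Zassenhaus supplying the Hall subgroup. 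Your assembly is slightly more economical in that it bypasses the commuting-probability bound of Lemma \ref{lem1}(4) entirely; the paper's version is shorter on the page only because the monotonicity is outsourced to a reference.
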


In \cite{MH}, the authors show that if $d_\pi(G)>5/8$, then $d_\pi(G)=1$ or $2/3$. They also study the structure of finite groups $G$ such that $\OO_{3'}(G)=1$ and $d_3(G)=2/3$. Thus if $p=3$ in Theorem D, then our results follow immediately from their results. However, if $p\ge 5$, then  $(p+1)/(2p)<5/8$. Hence our Theorems C and E above improve their Theorem $1$ and finally our last result includes Theorem $2$ in \cite{MH}.

\begin{thmF} Let $G$ be a finite group and let $p$ be an odd prime. Let $P$ be a Sylow $p$-subgroup of $G$. Suppose that $\OO_{p'}(G)=1$ and $d_p(G)=(p+1)/(2p)$. Then $P$ is abelian, $\Normalizer_G(P)/\Centralizer_G(P)$ has order $2$, $[P,\Normalizer_G(P)]$ has order $p$ and $G\cong A\times B$, where $B$ is an abelian $p$-group and $A$ is either a dihedral group of order $2p$ or  an almost simple group with a Sylow $p$-subgroup of order $p$ contained in the socle of $A$.
\end{thmF}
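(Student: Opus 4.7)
The approach has four main steps. \emph{Step 1 (abelianness of $P$):} since $k_p(G)\leq k(P)$, the hypothesis gives $k(P)/|P|\geq(p+1)/(2p)$. The known bound $k(P)/|P|\leq(p^2+p-1)/p^3$ for non-abelian $p$-groups (achieved when $|P:\Center(P)|=p^2$) is incompatible, since $(p^2+p-1)/p^3<(p+1)/(2p)$ for every odd prime $p$; thus $P$ is abelian. \emph{Step 2 (action of $\Normalizer_G(P)$):} with $H:=\Normalizer_G(P)/\Centralizer_G(P)$, Burnside's fusion theorem identifies $G$-classes of $p$-elements with $H$-orbits on $P$, so by Cauchy--Frobenius
\[
k_p(G)=\frac{1}{|H|}\sum_{h\in H}|\Centralizer_P(h)|.
\]
For non-trivial $h$, coprime action on abelian $P$ gives $P=\Centralizer_P(h)\times[P,h]$ with $|[P,h]|\geq p$, hence $|\Centralizer_P(h)|\leq|P|/p$. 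Substituting $k_p(G)=(p+1)|P|/(2p)$ forces $|H|\leq 2$; excluding $|H|=1$ (which would make $G$ $p$-nilpotent with $d_p(G)=1$) gives $|H|=2$, and tightness forces $|[P,h]|=p$ for the unique non-trivial $h$. Thus $[P,\Normalizer_G(P)]=\la x\ra$ has order $p$, and setting $P_0:=\Centralizer_P(\Normalizer_G(P))$, coprime decomposition gives $P=P_0\times\la x\ra$.

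\emph{Step 3 (the main obstacle: $P_0\leq\Center(G)$):} for $y\in P_0$, Burnside's fusion theorem forces $y^G\cap P=y^{\Normalizer_G(P)}=\{y\}$, so $y$ is isolated in $P$. The odd $Z^*$-theorem---which depends on the classification of finite simple groups---then yields $y\in Z^*(G)$, and $\OO_{p'}(G)=1$ gives $y\in\Center(G)$. This invocation of the odd $Z^*$-theorem is the principal hard input, and is the reason the proof, unlike those of Theorems A--C, uses the classification.

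\emph{Step 4 (product decomposition):} with $P_0\leq\Center(G)$ complemented in $P$ by $\la x\ra$, Gasch\"utz's theorem supplies a complement $A$ to $P_0$ in $G$, automatically normal by centrality of $P_0$, so $G=A\times B$ with $B:=P_0$ an abelian $p$-group. Then $A$ inherits $\OO_{p'}(A)=1$, has Sylow $p$-subgroup $\la x\ra$ of order $p$, and $|\Normalizer_A(\la x\ra)/\Centralizer_A(\la x\ra)|=2$. If $A$ is solvable, $\Fit(A)=\OO_p(A)$ cannot be trivial, so $\OO_p(A)=\la x\ra$; Burnside's normal $p$-complement applied inside $\Centralizer_A(\la x\ra)$ combined with $\OO_{p'}(A)=1$ forces $\Centralizer_A(\la x\ra)=\la x\ra$, whence $|A|=2p$ and $A\cong\textrm{D}_{2p}$. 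If $A$ is non-solvable, then $\OO_p(A)=1$ (else the preceding argument would make $A$ solvable), so $A$ has a unique component $L$ (two would enlarge the Sylow); $\Center(L)=1$ (else $\la x\ra$ would be normal in $A$), and $\Centralizer_A(L)$ is a normal $p'$-subgroup hence trivial, so $L\trianglelefteq A\leq\Aut(L)$ is almost simple with socle $L$ containing $\la x\ra$.
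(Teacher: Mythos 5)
Your Steps 1 and 2 are correct and land on the same intermediate facts as the paper ($P$ abelian, $|\Normalizer_G(P):\Centralizer_G(P)|=2$, $|[P,\Normalizer_G(P)]|=p$, and $P=P_0\times\la x\ra$), though you get there by orbit counting over $H=\Normalizer_G(P)/\Centralizer_G(P)$ where the paper uses the decomposition $P=(P\cap N')\times(P\cap\Center(N))$ from Gorenstein 7.4.4 plus a direct class-size count. From there your route is genuinely different: you prove $P_0\leq\Center(G)$ via isolated elements and the odd $Z^*$-theorem and then split $P_0$ off with Gasch\"utz, whereas the paper's proof of Theorem F never invokes the $Z_p^*$-theorem at all --- it runs the analysis on $\Fit^*(G)=\Fit(G)\Comp(G)$ with Bender's theorem, using the focal/transfer facts $P\cap G'=P\cap N'$ and $G'\cap Z=1$ to show $\Fit(G)\leq\Center(G)$. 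Both approaches work and both ultimately rest on the classification (yours through $Z_p^*$, the paper's through the fact that $|\Out(E)|$ is prime to $p$ for a simple group $E$ with Sylow $p$-subgroup of order $p$); your second half is arguably cleaner, but it carries one real gap.

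The gap is in Step 3. The $Z_p^*$-theorem as stated (and as proved in Guralnick--Robinson) applies to elements of \emph{order $p$}; it does not directly assert that an isolated $p$-element $y$ of order $p^k$ with $k\ge 2$ lies in $\Center_p^*(G)$. Since $P_0$ need not have exponent $p$, your argument only gives $\Omega_1(P_0)\leq\Center(G)$ for free, while Gasch\"utz and the direct-product decomposition require all of $P_0$ to be central. This is fixable by induction on the order of $y$: the order-$p$ element $z$ of $\la y\ra$ is also isolated (every power of an isolated element is isolated), hence $z\in\Center(G)$; one checks $\OO_{p'}(G/\la z\ra)=1$ and that $\bar y$ is still isolated, so by induction $[G,y]\leq\la z\ra\leq\Center(G)$; then $g\mapsto[g,y]$ is a homomorphism into $\la z\ra$, so $y^G\subseteq y\la z\ra\subseteq P$, and isolation forces $y^G=\{y\}$. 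You should supply this (or an equivalent) reduction explicitly. A much smaller omission in Step 4: to rule out a second component of $A$ you implicitly need every component to have order divisible by $p$, which holds because a subnormal $p'$-subgroup of $A$ lies in $\OO_{p'}(A)=1$; this deserves a sentence.
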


The paper is organized as follows. We collection some results needed for the proofs of the main theorems in Section~\ref{sec2}. We prove Theorems A-C in Section~\ref{sec3} and prove Theorems E-F in Section~\ref{sec4}.
\section{Control of fusion and Glauberman $Z^*$-theorem}\label{sec2}
Let $G$ be a finite group and let $K\leq H\leq G$ be subgroups of $G$. We say that \emph{$H$ controls $G$-fusion in $K$} if and only if every pair of $G$-conjugate elements of $K$ are $H$-conjugate, that is, if $x,x^g\in K$ for some $g\in G,$ then $x^g=x^h$ for some $h\in H$. 
Let $p$ be a prime and let $H$ be a subgroup of $G$.  We say that $H$ \emph{controls $p$-fusion} in $G$ if $H$ contains a Sylow $p$-subgroup $P$ of $G$ and $H$ controls $G$-fusion in $P$. 
We first list some classical results on the existence of normal $p$-complements as well as the control of fusion in finite groups.
\begin{lem}\label{lem: fusion} Let $G$ be a finite group and let $P$ be a Sylow $p$-subgroup of $G$ for some prime $p$.
\begin{enumerate}[$(1)$]
\item $\Normalizer_G(P)$ controls $G$-fusion in $\Centralizer_G(P).$

\item  If $P\subseteq \Center(\Normalizer_G(P))$, then $G$ has a normal $p$-complement.
\item $G$ has a normal $p$-complement if and only if $P$ controls its own fusion in $G$.
\end{enumerate}
\end{lem}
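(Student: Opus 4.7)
The plan is to derive all three assertions from two standard tools: Sylow's theorem applied inside a centralizer, and the Burnside/focal subgroup transfer machinery.

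For (1), I would take $x,x^g\in\Centralizer_G(P)$ and observe that $x$ centralizes both $P$ and $P^{g^{-1}}$, so both of these occur as Sylow $p$-subgroups of $\Centralizer_G(x)$. Sylow's theorem applied inside $\Centralizer_G(x)$ then supplies $c\in\Centralizer_G(x)$ with $P^{g^{-1}c}=P$. Setting $h=c^{-1}g$ yields $h\in\Normalizer_G(P)$, and since $c$ centralizes $x$ we obtain $x^h=x^{c^{-1}g}=x^g$, as required.

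For the forward direction of (3), assume $N$ is a normal $p$-complement, so $G=NP$ and $N\cap P=1$. Given $x,x^g\in P$, write $g=nh$ with $n\in N$ and $h\in P$. Normality of $N$ gives $x^n=xz$ with $z=[x,n]\in N$, whence $x^g=(xz)^h=x^h\cdot z^h$ and $z^h\in N$; since $x^g,x^h\in P$, this forces $z^h\in P\cap N=1$, and hence $x^g=x^h$ with $h\in P$. The reverse direction is a transfer argument: control of fusion identifies the focal subgroup $P\cap G'$ with $P'$, and the Burnside transfer $G\to P/P'$ is then surjective with kernel a normal $p$-complement. This is precisely the content of the reference [5.25] of Isaacs cited in the introduction, so I would simply invoke it here.

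Part (2) falls out quickly from (1) and (3). The hypothesis $P\subseteq\Center(\Normalizer_G(P))$ forces $P$ to be abelian, so in particular $P\subseteq\Centralizer_G(P)$ and (1) applies to every element of $P$. But $\Normalizer_G(P)$-conjugation acts trivially on $P$ by centrality, so any two $G$-conjugate elements of $P$ must actually coincide; thus $P$ trivially controls its own fusion, and (3) delivers the desired normal $p$-complement.

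The main obstacle is the converse direction of (3), which is the only step that genuinely requires transfer theory (via the focal subgroup theorem, or equivalently Alperin's fusion theorem). The remaining steps are either one-line Sylow arguments or direct computations exploiting $P\cap N=1$.
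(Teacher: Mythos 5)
Your proposal is correct. The paper itself offers no argument for this lemma beyond a citation to Isaacs (Lemma 5.12, Theorems 5.13 and 5.25), so there is nothing in the source to diverge from; your write-ups of (1) and of the forward direction of (3) are exactly the standard Sylow-in-the-centralizer and $P\cap N=1$ computations those references contain, and deferring the reverse direction of (3) to the focal subgroup/transfer machinery is precisely what the paper does. The one organizational point worth noting is your treatment of (2): rather than proving Burnside's normal $p$-complement theorem by a direct transfer evaluation (as in Isaacs' Theorem 5.13), you deduce it from (1) together with the hard direction of (3), using that $\Normalizer_G(P)$ acts trivially on a central $P$ so that $G$-fusion in $P$ is trivial. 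This is a legitimate and clean reduction; it costs nothing here since the transfer input is already needed for (3), though it makes (2) logically dependent on (3) rather than independent of it.
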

\begin{proof} These are well-known results, for proofs, see Lemma 5.12, Theorems 5.13 and 5.25 in \cite{Isaacs}.
\end{proof}
Parts (1) and (2) above are known as Burnside's lemma and Burnside's normal $p$-complement theorem, respectively.
Here are some obvious consequences of the lemma.
\begin{cor}\label{cor}  Let $G$ be a finite group and let $P$ be a Sylow $p$-subgroup of $G$ for some prime $p$.
\begin{enumerate}[$(1)$]
\item $k_p(G)\leq k(P)$ and equality holds if and only if $G$ has a normal $p$-complement.
\item $k_p(G)=|P|$ or equivalently $d_p(G)=1$ if and only if $G$ has a normal $p$-complement and an abelian Sylow $p$-subgroup.
\end{enumerate}
\end{cor}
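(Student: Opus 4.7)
The plan is to derive both parts as formal consequences of Sylow's theorem together with Lemma~\ref{lem: fusion}(3), exactly along the lines sketched in the introduction.

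For part (1), I would start from the observation (already used in the introduction) that every $p$-element of $G$ is $G$-conjugate into $P$, so we can choose a system $\Gamma$ of representatives of the $G$-classes of $p$-elements entirely inside $P$. Any two elements of $\Gamma$ that are $P$-conjugate are $G$-conjugate, so they must coincide; thus $\Gamma$ injects into the set of $P$-conjugacy classes of $P$, giving $k_p(G)\le k(P)$. Equality forces each $P$-class in $P$ to be contained in a distinct $G$-class, i.e.\ $x^G\cap P=x^P$ for every $x\in P$, which is precisely the statement that $P$ controls its own fusion in $G$. By Lemma~\ref{lem: fusion}(3) this is equivalent to $G$ having a normal $p$-complement. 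The converse direction is the same equivalence read backwards.

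For part (2), I would combine part (1) with the elementary fact that $k(P)\le |P|$, with equality if and only if every $P$-conjugacy class in $P$ has size one, i.e.\ $P$ is abelian. If $k_p(G)=|P|$, then since $k_p(G)\le k(P)\le |P|$ we get both $k_p(G)=k(P)$ (so by (1), $G$ is $p$-nilpotent) and $k(P)=|P|$ (so $P$ is abelian). Conversely, if $G$ has a normal $p$-complement and abelian Sylow $p$-subgroup, then (1) gives $k_p(G)=k(P)$ and abelianness gives $k(P)=|P|$. The equivalence with $d_p(G)=1$ is immediate from the definition $d_p(G)=k_p(G)/|P|$.

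There is no real obstacle here; the content is entirely packaged into Lemma~\ref{lem: fusion}(3), so the corollary is essentially a restatement plus the trivial bookkeeping $k(P)\le |P|$. The only thing to be slightly careful about is to justify the injectivity argument cleanly, namely that distinct elements of a set of representatives $\Gamma\subseteq P$ of $G$-classes of $p$-elements automatically lie in distinct $P$-classes of $P$, which is immediate since $P$-conjugacy implies $G$-conjugacy.
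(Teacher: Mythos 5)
Your proof is correct and follows exactly the route the paper intends: the paper gives no separate proof of this corollary, but its introduction sketches precisely your argument (choose $\Gamma\subseteq P$ via Sylow's theorem for part (1), note that equality is the fusion condition $x^G\cap P=x^P$ and invoke Lemma~\ref{lem: fusion}(3), then combine with $k(P)\leq|P|$ for part (2)). Nothing to add.
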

Note that part (1) of the corollary is equivalent to the statement that $d_p(G)\leq d(P)$ and equality holds if and only if $G$ has a normal $p$-complement.
The following result is a consequence of the definitions above and Sylow's theorem.

\begin{lem}\label{lem: controls of p-fusion}
Let $G$ be a finite group and let $P$ be a Sylow $p$-subgroup of $G$ for some prime $p$. Let $x\in P$. Then $x^G\cap P=\{x\}$ if and only if  $\Centralizer_G(x)$ controls $p$-fusion in $G$. 
\end{lem}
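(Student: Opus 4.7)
Both directions reduce to Sylow's theorem together with the definitions; the substantive content is a short fusion argument inside the centralizer of an auxiliary element.

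For $(\Rightarrow)$, assume $x^G\cap P=\{x\}$. The first step is to observe that $P\subseteq\Centralizer_G(x)$: for any $y\in P$, the conjugate $x^y$ lies in $P\cap x^G=\{x\}$, hence $y\in\Centralizer_G(x)$. Thus $P$ is a Sylow $p$-subgroup of $G$ inside $\Centralizer_G(x)$, and it remains to check that $\Centralizer_G(x)$ controls $G$-fusion in $P$. Fix $y\in P$ and $g\in G$ with $y^g\in P$. Since $P\subseteq\Centralizer_G(x)$, the element $y$ commutes with both $x$ and (because $y^g\in\Centralizer_G(x)$) with $x^{g^{-1}}$, so $x$ and $x^{g^{-1}}$ are $p$-elements of $\Centralizer_G(y)$. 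Sylow's theorem in $\Centralizer_G(y)$ then produces $c\in\Centralizer_G(y)$ such that $x$ and $x^{g^{-1}c^{-1}}$ lie in a common Sylow $p$-subgroup of $\Centralizer_G(y)$, which in turn sits inside a Sylow $p$-subgroup of $G$ that I conjugate into $P$ by some $h\in G$. Now $x^{h^{-1}}$ and $x^{g^{-1}c^{-1}h^{-1}}$ both lie in $P\cap x^G=\{x\}$, and the hypothesis gives $h\in\Centralizer_G(x)$ and $g^{-1}c^{-1}h^{-1}\in\Centralizer_G(x)$; combining, $g=c^{-1}c'$ with $c'\in\Centralizer_G(x)$, and since $c^{-1}\in\Centralizer_G(y)$ fixes $y$, this produces $y^g=y^{c'}$ as required.

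For $(\Leftarrow)$, let $Q\subseteq\Centralizer_G(x)$ be a Sylow $p$-subgroup of $G$ witnessing the control of $p$-fusion. Since $x$ lies in the center of $\Centralizer_G(x)$, it belongs to every Sylow $p$-subgroup of $\Centralizer_G(x)$; in particular $x\in Q$. If $x^g\in P$, choose $h\in G$ with $P=Q^h$; then $x^{h^{-1}}$ and $x^{gh^{-1}}$ both lie in $Q$ and are $G$-conjugate to $x\in Q$. The $G$-fusion control of $\Centralizer_G(x)$ on $Q$, together with the fact that $\Centralizer_G(x)$ fixes $x$, forces each of them to equal $x$, so $h^{-1},gh^{-1}\in\Centralizer_G(x)$ and hence $g\in\Centralizer_G(x)$, giving $x^g=x$.

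The main obstacle is the Sylow-shift in the forward direction: one must locate the element $c\in\Centralizer_G(y)$ so that $x$ and $x^{g^{-1}c^{-1}}$ share a common Sylow $p$-subgroup of $G$, which is exactly what lets the hypothesis $x^G\cap P=\{x\}$ be applied twice to extract the required decomposition $g\in\Centralizer_G(y)\Centralizer_G(x)$. The other steps are routine applications of Sylow's theorem and the definitions recalled in Section~\ref{sec2}.
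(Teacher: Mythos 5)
Your proposal is correct and follows essentially the same route as the paper: in the forward direction you deduce $P\leq\Centralizer_G(x)$, pass to $p$-elements $x,x^{g^{-1}}$ of $\Centralizer_G(y)$, and use Sylow's theorem there to place them (after adjusting by $c\in\Centralizer_G(y)$) in a common Sylow subgroup that you conjugate into $P$, applying the hypothesis $x^G\cap P=\{x\}$ twice to get $g\in\Centralizer_G(y)\Centralizer_G(x)$ — exactly the paper's argument with $h$ playing the role of its $t$. The converse likewise matches the paper's, so there is nothing further to add.
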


\begin{proof}
Let $x\in P$. Assume that $x^G\cap P=\{x\}$. We claim that $\Centralizer_G(x)$ controls $p$-fusion in $G$. Since $x^P\subseteq x^G\cap P=\{x\}$, we see that $x\in \Center(P)$ and thus $P\leq \Centralizer_G(x)$. Now  assume  that $y,y^g\in P$ for some $g\in G.$
We need to show that $y^g=y^h$ for some $h\in \Centralizer_G(x)$.
We have that $\{y,y^g\}\subseteq P\subseteq \Centralizer_G(x)$ which implies that $\{x,x^{g^{-1}}\}\subseteq\Centralizer_G(y)$. Let $U$ be a Sylow $p$-subgroup of $\Centralizer_G(y)$ containing $x$. By Sylow's theorem, $U\leq P^t$ for some $t\in G$. It follows that $x^{t^{-1}}\in P\cap x^G=\{x\}$; hence $x^{t^{-1}}=x$, so $t\in \Centralizer_G(x).$ Now $x^{g^{-1}}\in U^c$ for some $c\in \Centralizer_G(y)$ as $x^{g^{-1}}\in \Centralizer_G(y)$ is a $p$-element. We now have that $x^{g^{-1}c^{-1}t^{-1}}\in  P$ and thus $x^{g^{-1}c^{-1}t^{-1}}=x$ which implies that $g^{-1}c^{-1}\in \Centralizer_G(x)$. Therefore  $cg=h\in \Centralizer_G(x).$ Now $y^g=y^{cg}=y^h$ as wanted.

For the converse, let $P_1\in\Syl_p(G)$ and assume that $P_1\subseteq \Centralizer_G(x)$ and that $\Centralizer_G(x)$  controls $G$-fusion in $P_1$. It follows that  $x\in P_1$. By Sylow's theorem, $P=P_1^t$ for some $t\in G.$ Since $x\in P,$ $x^{t^{-1}}\in P_1\leq \Centralizer_G(x)$. As $\Centralizer_G(x)$ controls $G$-fusion in $P_1$, it follows that $x^{t^{-1}}=x^h$ for some $h\in\Centralizer_G(x)$. Hence  $x^{t^{-1}}=x^h=x$ and so $t\in\Centralizer_G(x)$. In particular, $P=P_1^t\subseteq \Centralizer_G(x)$. Finally, if  $x^g\in P$ for some $g\in G,$ then $x^g=x^h$ for some $h\in \Centralizer_G(x)$  and so $x^g=x^h=x.$ Therefore $x^G\cap P=\{x\}$.
\end{proof}

 For a finite group $G$ and a prime $p$, we define $\Center_p^*(G)$ to be the normal subgroup of $G$ such that $\Center_p^*(G)/\OO_{p'}(G)=\Center(G/\OO_{p'}(G))$.

We first state the original Glauberman's $Z^*$-Theorem whose proof does not depend on the classification of finite simple groups.
\begin{lem}\emph{(Glauberman's $Z^*$-Theorem)}\label{lem2} Let $G$ be a finite group and let $P$ be a Sylow $2$-subgroup of $G$. If $x\in P$  and $x^G\cap P=\{x\}$, then $x\in \Center_2^*(G)$.
\end{lem}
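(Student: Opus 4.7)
The plan is to reduce the general statement to the classical case of involutions, which is the original form of Glauberman's $Z^*$-theorem and rests on modular representation theory (essentially an analysis of the principal $2$-block). After replacing $G$ by $G/\OO_{2'}(G)$ we may assume $\OO_{2'}(G)=1$ and must show that $x\in\Center(G)$.

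I would argue by induction on $|x|$. The base case $|x|=2$ is the involution version of Glauberman's $Z^*$-theorem, which I would simply quote. For $|x|=2^n>2$, let $y=x^2$. By Lemma \ref{lem: controls of p-fusion}, $\Centralizer_G(x)$ controls $G$-fusion in $P$; so if $y^g\in P$ then $y^g=y^h$ for some $h\in \Centralizer_G(x)$, and $x^h=x$ forces $y^h=y$. Hence $y^G\cap P=\{y\}$, and by induction $y\in\Center(G)$.

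Next I pass to $\bar G = G/\langle y\rangle$. A short Schur--Zassenhaus argument, using that $\langle y\rangle$ is a central $2$-subgroup and $\OO_{2'}(G)=1$, yields $\OO_{2'}(\bar G)=1$. The image $\bar x$ is an involution in the Sylow $2$-subgroup $\bar P$, and the hypothesis on $x$ passes to $\bar x^{\bar G}\cap\bar P=\{\bar x\}$ (since $\langle y\rangle\subseteq P$, any preimage $g$ of $\bar g$ with $\bar x^{\bar g}\in\bar P$ satisfies $x^g\in P$, hence $x^g=x$). The base case applied in $\bar G$ then gives $\bar x\in\Center(\bar G)$, i.e.\ $[x,G]\le \langle y\rangle\le\Center(G)$.

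Finally, the identity $[x,gh]=[x,h]\,[x,g]^h$ together with $[x,g]\in\Center(G)$ shows that $g\mapsto [x,g]$ is a homomorphism from $G$ into the abelian $2$-group $\langle y\rangle$, with kernel $\Centralizer_G(x)\supseteq P$. Its image order is a $2$-power but also divides the odd number $[G:P]$, forcing $\Centralizer_G(x)=G$ and hence $x\in\Center(G)$. The main obstacle is clearly the base case itself, which is the deep block-theoretic content of Glauberman's original theorem; the inductive machinery above is then essentially formal.
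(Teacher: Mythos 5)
Your argument is correct, but it is a genuinely different route from the paper's. The paper disposes of this lemma in one line, citing Theorem~3 of Glauberman's original article, which is already stated for arbitrary $2$-elements of a Sylow $2$-subgroup; no reduction is performed in the text. What you have written is instead a self-contained derivation of the general statement from the involution form of the $Z^*$-theorem (the version most commonly quoted), by induction on $|x|$: pass to $G/\OO_{2'}(G)$, show $y=x^2$ satisfies the same fusion hypothesis, quotient by the central subgroup $\langle y\rangle$, apply the base case to the involution $\bar x$, and finish with the transfer-style observation that $g\mapsto[x,g]$ is a homomorphism into a central $2$-group whose image must also have odd order. Each step checks out; note that your claim that $y^G\cap P=\{y\}$ is exactly Lemma~\ref{lem: power up} of the paper (via Lemma~\ref{lem: controls of p-fusion}), so you could cite it rather than reprove it, and the initial reduction to $\OO_{2'}(G)=1$ silently uses a one-line Sylow argument inside $P\OO_{2'}(G)$ to transfer the hypothesis $x^G\cap P=\{x\}$ to the quotient --- worth making explicit. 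The trade-off is clear: the paper's citation is shorter but asks the reader to accept the stronger form of Glauberman's theorem as stated in the source, whereas your reduction makes the lemma available to anyone who only knows the involution case, at the cost of a page of (essentially formal) bookkeeping.
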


\begin{proof}
This is a restatement of Theorem $3$ in \cite{Glauberman}.
\end{proof}
The odd version of the Glauberman's $Z^*$-theorem, which is called the Glauberman's $Z_p^*$-theorem says that if $x\in P$ is an element of order $p$ and $x^G\cap P=\{x\}$, then $x\in\Center_p^*(G)$. The proof of this theorem depends on the classification (for a proof, see \cite[Theorem 4.1]{GR2}). By Sylow's theorem, it is easy to see that if $x^G\cap P=\
\{x\}$ then $x$ does not commute with any $G$-conjugate $x^g\neq x$ of $x$. 
Finally, the conclusion of  the Glauberman's $Z_p^*$-theorem can be written as $G=\Centralizer_G(x)\OO_{p'}(G)$. 

For an arbitrary $p$-element $x\in P$ which is not of prime order satisfying $x^G\cap P=\{
x\}$, 
to use the Glauberman's $Z_p^*$-Theorem, we need the following lemma.
\begin{lem}\label{lem: power up}
Let $G$ be a finite group and let $P$ be a Sylow $p$-subgroup of $G$ for some prime $p$. Let  $x\in P$.  If $x^G\cap P=\{x\}$, then $y^G\cap P=\{y\}$ for every $y\in \langle x\rangle$.
\end{lem}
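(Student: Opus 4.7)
The plan is to apply the fusion-control characterization just established in Lemma~\ref{lem: controls of p-fusion} and transfer it from $x$ to every power of $x$. The key observation is that the hypothesis $x^G\cap P=\{x\}$ already forces $x\in \Center(P)$, since $x^P\subseteq x^G\cap P=\{x\}$. Consequently $\langle x\rangle\subseteq \Center(P)$, so every $y\in\langle x\rangle$ satisfies $P\subseteq \Centralizer_G(y)$, and in particular lies in $P$ itself.

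By Lemma~\ref{lem: controls of p-fusion}, the centralizer $H:=\Centralizer_G(x)$ controls $p$-fusion in $G$ (with respect to the Sylow subgroup $P$, which is contained in $H$). Now suppose $y^g\in P$ for some $g\in G$. Both $y$ and $y^g$ lie in $P\subseteq H$, so fusion control supplies an element $h\in H=\Centralizer_G(x)$ with $y^g=y^h$. Since $h$ centralizes $x$ and $y$ is a power of $x$, $h$ also centralizes $y$; that is, $H\subseteq \Centralizer_G(y)$. Therefore $y^h=y$, which gives $y^g=y$. Taking $g$ arbitrary yields $y^G\cap P\subseteq \{y\}$, and the reverse inclusion is trivial.

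I do not anticipate any genuine obstacle here: the whole point is that the inclusion $\Centralizer_G(x)\subseteq \Centralizer_G(y)$, valid for any $y\in\langle x\rangle$, is exactly what lets the fusion-control property of $\Centralizer_G(x)$ pass through to $y$. The only small thing to be careful about is to invoke $x\in\Center(P)$ before appealing to Lemma~\ref{lem: controls of p-fusion}, so that $P\leq H$ and the hypothesis of that lemma (that $H$ contain a Sylow $p$-subgroup) is actually met.
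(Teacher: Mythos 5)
Your proof is correct and rests on the same two ingredients as the paper's: the forward direction of Lemma~\ref{lem: controls of p-fusion} applied to $x$, and the inclusion $\Centralizer_G(x)\leq\Centralizer_G(y)$ for $y\in\langle x\rangle$. The only (cosmetic) difference is that you conclude $y^g=y$ directly from $y^g=y^h$ with $h\in\Centralizer_G(x)\subseteq\Centralizer_G(y)$, whereas the paper first verifies that $\Centralizer_G(y)$ controls $p$-fusion in $G$ and then cites the converse direction of Lemma~\ref{lem: controls of p-fusion}.
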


\begin{proof}
Suppose that $x^G\cap P=\{x\}$ and $y\in\langle x\rangle.$ Then $P\leq \Centralizer_G(x)\leq \Centralizer_G(y)$. By Lemma \ref{lem: controls of p-fusion}, we need to show that $\Centralizer_G(y)$ controls $G$-fusion in $P$.
Let $z,z^g\in P$ for some $g\in G.$ By Lemma \ref{lem: controls of p-fusion}, $\Centralizer_G(x)$ controls $p$-fusion in $G$ so $z^g=z^t$ for some $t\in\Centralizer_G(x)$. As $\Centralizer_G(x)\leq \Centralizer_G(y)$, we have $t\in \Centralizer_G(y)$ and the claim follows.
\end{proof}

We will need the following results.
\begin{lem}\label{lem1} Let $G$ be a finite group and let $\pi$ be a non-empty set of primes.
\begin{enumerate}[$(1)$]
\item If $\mu\subseteq \pi$ is a non-empty subset, then $d_\pi(G)\leq d_\mu(G)\leq 1$. 
 \item If $N\unlhd G$, then $d_\pi(G)\leq d_\pi(G/N)d_\pi(N)$.
 \item If $G$ is a non-abelian $p$-group for some prime $p$, then $d(G)<(p+1)/p^2$.
 \item If $G$ does not have any normal Sylow $p$-subgroup for some prime $p$, then $d(G)\leq 1/p.$

\end{enumerate}
\end{lem}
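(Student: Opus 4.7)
For part (1), my plan is to use the canonical decomposition of a $\pi$-element $x$ as a commuting product $x=yz$, with $y$ the $\mu$-part and $z$ the $(\pi\setminus\mu)$-part. The $G$-equivariant map $[x]_G\mapsto[y]_G$ has fibers parameterized by $(\pi\setminus\mu)$-element classes of centralizers:
\[
k_\pi(G)=\sum_{[y]}k_{\pi\setminus\mu}(\Centralizer_G(y)),
\]
where $[y]$ ranges over $G$-classes of $\mu$-elements. Induction on $|\pi|$, with base case $k_q(H)\leq|H|_q$ from Corollary \ref{cor}(1), then gives $k_\tau(H)\leq|H|_\tau$ for every group $H$ and set of primes $\tau$. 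Applied to $\tau=\pi\setminus\mu$ and $H=\Centralizer_G(y)$, this yields $k_\pi(G)\leq k_\mu(G)\,|G|_{\pi\setminus\mu}$; dividing by $|G|_\pi=|G|_\mu\,|G|_{\pi\setminus\mu}$ gives $d_\pi(G)\leq d_\mu(G)$, and the outer bound $d_\mu(G)\leq 1$ is the case $\mu=\pi$.

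For part (2), my plan is to adapt Gallagher's inequality $k(G)\leq k(G/N)k(N)$ to $\pi$-elements. The projection $[g]_G\mapsto[\bar g]_{G/N}$ sends $\pi$-classes to $\pi$-classes. Fixing a $\pi$-element lift $g$ of $\bar g$, the representatives of the fiber lie in the coset $gN$, and $G$-conjugacy on $gN$ reduces to $H$-conjugacy, where $H$ is the preimage of $\Centralizer_{G/N}(\bar g)$. One then identifies these $H$-orbits on $\pi$-elements of $gN$ with a subset of the $N$-conjugacy classes of $\pi$-elements of $N$, bounding the fiber by $k_\pi(N)$. Combined with the multiplicativity $|G|_\pi=|G/N|_\pi\,|N|_\pi$, this yields the inequality.

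Part (3) follows from the class equation in a $p$-group: non-central conjugacy classes have size divisible by $p$, so $|G|-|\Center(G)|\geq p(k(G)-|\Center(G)|)$, giving $k(G)\leq(|G|+(p-1)|\Center(G)|)/p$. Non-abelianness forces $G/\Center(G)$ to be non-cyclic, hence $|G:\Center(G)|\geq p^2$; substituting $|\Center(G)|\leq|G|/p^2$ yields $d(G)\leq(p^2+p-1)/p^3<(p^2+p)/p^3=(p+1)/p^2$.

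Part (4) is the deepest step and what I expect to be the main obstacle. My plan is induction on $|G|$ using part (2). If $\OO_p(G)\neq 1$, then $G/\OO_p(G)$ still has no normal Sylow $p$-subgroup (else its preimage would be the Sylow $p$-subgroup $P$, normal in $G$), so induction and (2) give $d(G)\leq d(\OO_p(G))d(G/\OO_p(G))\leq 1/p$. An analogous reduction handles $\OO_{p'}(G)\neq 1$ provided the quotient retains the hypothesis, and a further reduction to $\Center(G)=1$ follows by taking a central subgroup $Z_0$ of prime order and checking that $G/Z_0$ still has no normal Sylow $p$-subgroup (in both cases $|Z_0|=p$ and $|Z_0|\neq p$, as any pulled-back normal Sylow would contradict the hypothesis). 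The difficult residual cases are (i) $\Center(G)=1$ with $G/\OO_{p'}(G)$ possessing a normal Sylow $p$-subgroup, where Frattini's argument yields $G=\Normalizer_G(P)\OO_{p'}(G)$ with $P\triangleleft\Normalizer_G(P)$ yet $P\not\triangleleft G$, and (ii) $\OO_p(G)=\OO_{p'}(G)=\Center(G)=1$, so $F^*(G)$ is a direct product of non-abelian simple groups. The naive class-equation bound using $|\Center(G)|\leq|G|/p$ only gives $d(G)\leq(2p-1)/p^2$, so pushing to the sharp $1/p$ in these residual cases will require sharper input: exploiting the Sylow-count estimate $n_p\geq p+1$, and in case (ii) invoking the known bound $d(S)\leq 1/p$ for non-abelian simple $S$ with $p\mid|S|$ (combined with $\Centralizer_G(F^*(G))\leq F^*(G)$).
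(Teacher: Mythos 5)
The paper does not actually prove this lemma: it cites \cite{MH} (Proposition 5) for part (1), \cite{FG} (Lemma 2.3) for part (2), and \cite{GR} (Lemma 2) for parts (3) and (4), so any genuine argument you give is already "a different route." Your parts (1) and (3) are correct and complete: the decomposition of a $\pi$-element into its commuting $\mu$-part and $(\pi\setminus\mu)$-part, together with the fiber count $k_\pi(G)=\sum_{[y]}k_{\pi\setminus\mu}(\Centralizer_G(y))$ and the induction giving $k_\tau(H)\leq |H|_\tau$, is exactly the right mechanism for (1) (and is essentially the argument in \cite{MH}); the class-equation computation for (3), using $|G:\Center(G)|\geq p^2$, is the standard proof.

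Parts (2) and (4) have genuine gaps. In (2) the entire difficulty is the claim that the number of $H$-orbits on the $\pi$-elements of $gN$ is at most $k_\pi(N)$, and the mechanism you propose --- "identifying" these orbits with a subset of the $N$-classes of $\pi$-elements of $N$ --- does not exist in any natural form. The only canonical map, $y\mapsto y^{r}$ with $r$ the order of $\bar g$, is not injective on orbits (in the dihedral group of order $8$ with $N$ cyclic of order $4$ and $\pi=\{2\}$, the two classes of reflections both power to the identity), and the pointwise fixed-point comparison that drives Gallagher's averaging proof of $k(G)\leq k(N)k(G/N)$ fails for $\pi$-elements: in $G=\Sym_3$ with $N$ of order $3$, $\pi=\{2\}$ and $h=1$, the coset $gN$ contains three $\pi$-elements while $\Centralizer_N(h)=N$ contains only one, so $|(gN)_\pi\cap\Centralizer_G(h)|\not\leq|\Centralizer_N(h)_\pi|$. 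The inequality is true, but it is precisely the content of \cite[Lemma 2.3]{FG} and requires either that citation or a genuinely global counting argument that you have not supplied. In (4) you candidly leave the residual cases open, and those residual cases are the whole theorem: your reductions (killing $\OO_p(G)$ and a central subgroup) are fine, but the extremal configuration is exactly your case (i) --- for instance $\Sym_3$ with $p=2$, where $d(G)=1/2=1/p$ --- and the "sharper input" you name ($n_p\geq p+1$, $d(S)\leq 1/p$ for simple $S$) is listed rather than deployed. As written, (2) and (4) are statements of intent; the honest alternatives are to cite \cite{FG} and \cite{GR} as the paper does, or to work out the missing arguments in full.
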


\begin{proof} Part (1) can be found in \cite[Proposition $5$]{MH} and  Part (2) is Lemma $2.3$ in \cite{FG}.  Finally, the last two parts can be found in Lemma 2 in \cite{GR}.\end{proof}

Finite groups $G$ with $d(G)\ge 1/2$ were classified by Lescot in \cite{Lescot} and \cite{Lescot2}. To state the result, we need the following notation. For any integer $m\ge 1$, denote by $G_m$ the group defined by \[G_m=\langle a,b:a^3=b^{2^{m}}=1,a^b=a^{-1}\rangle.\] Note that $G_1\cong \Sym_3.$ We have that $|G_m|=3\cdot 2^m, \Center(G_m)=\langle b^2\rangle ,G_m'=\langle a\rangle$, and  $G_m/\Center(G_m)\cong \Sym_3$.

\begin{lem}\label{lem: classification} Let $G$ be a finite group. Then $d(G)\ge 1/2$ if and only if one of the following holds
\begin{enumerate}[$(i)$]
\item $G$ is abelian and $d(G)=1$.
\item $G\cong P\times A$, where $A$ is abelian of odd order and $P$ is a Sylow $2$-subgroup of $G$ with $|G'|=|P'|=2$ and $d(G)=d(P)=(1+4^{-m})/2$ and $G/\Center(G)$ is elementary abelian of order $4^m$ for some integer $m\ge 1$. Moreover, $1/2<d(G)\leq 5/8.$
\item $G\cong G_m\times A$ and $d(G)=1/2$, where $A$ is abelian and $m\ge 1.$
\end{enumerate}
\end{lem}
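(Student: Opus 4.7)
The plan is to verify the ``if'' direction by direct computation and tackle the ``only if'' direction by reducing to successively more structured groups via Lemma~\ref{lem1}. For case (i), $d(G)=1$ is immediate. For case (ii), the hypothesis $|G'|=2$ forces $G'\leq \Center(G)$ (since $|G'|$ is prime), making the commutator map $[g,\,\cdot\,]:G\to G'$ a homomorphism for each $g$; consequently every non-central conjugacy class has size exactly $2$, and $k(G)=(|G|+|\Center(G)|)/2$, whence $d(G)=(1+4^{-m})/2$. For case (iii), I would enumerate the conjugacy classes of $G_m$ directly from the presentation to obtain $k(G_m)=3\cdot 2^{m-1}$ and $d(G_m)=1/2$; multiplication by the abelian $A$ preserves the density.

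For the ``only if'' direction, assume $G$ is non-abelian with $d(G)\geq 1/2$. Since $1/p\leq 1/3 <1/2$ for every odd prime $p$, Lemma~\ref{lem1}(4) yields a normal Sylow $p$-subgroup for each odd $p$, so $H=\OO_{2'}(G)$ is a normal Hall $2'$-subgroup and $G=H\rtimes P$ with $P\in\Syl_2(G)$. If any odd Sylow of $H$ were non-abelian, Lemma~\ref{lem1}(3) would bound its density by $4/9$, whence $d(H)=\prod_{p\mid|H|}d(P_p)\leq 4/9$; Lemma~\ref{lem1}(2) would then give $d(G)\leq d(P)d(H)\leq 4/9<1/2$, a contradiction. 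So $H$ is abelian and $d(P)\geq d(G)$. Now I split on whether the inequality $d(G)\geq 1/2$ is strict. If $d(G)>1/2$, Lemma~\ref{lem1}(4) with $p=2$ additionally forces $P\lhd G$, so $G=P\times H$ is nilpotent: either $P$ is abelian and we land in case~(i), or $P$ is a non-abelian $2$-group with $d(P)>1/2$ and I aim to prove $|P'|=2$ with $P/\Center(P)$ elementary abelian of order $4^m$. If instead $d(G)=1/2$ exactly and $G$ is non-nilpotent, then $P$ acts non-trivially on $H$, and the goal becomes to show $P$ is abelian and then force $G\cong G_m\times A$ via an orbit count on $H$.

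The main obstacle is the structural claim for non-abelian $2$-groups with $d(P)>1/2$. The bound $d(P)\leq 1/2+|\Center(P)|/(2|P|)$ is immediate from the fact that non-central classes have size at least $2$, but sharpening this to force every non-central class to have size \emph{exactly} $2$ --- which is what yields $|P'|=2$ --- requires additional input, typically an inductive commutator-map argument on $|P|$. Once $|P'|=2$ is known, the induced commutator pairing $P/\Center(P)\times P/\Center(P)\to P'\cong \Z/2\Z$ is a non-degenerate alternating $\mathbb{F}_2$-bilinear form, forcing $P/\Center(P)$ to be elementary abelian of even rank $2m$, i.e.\ of order $4^m$. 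A secondary difficulty arises in case~(iii): ruling out non-abelian $P$ amounts to showing that the nontrivial action of $P$ on $H$ collapses orbits strictly rather than preserving the product bound in Lemma~\ref{lem1}(2), after which a direct orbit count on $H$ pins down the action as inversion on a single $\Z/3\Z$-factor, yielding the stated decomposition $G\cong G_m\times A$.
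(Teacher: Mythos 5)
Your reductions are sound and correctly exploit Lemma~\ref{lem1}: the verification of the ``if'' direction is complete (the commutator-homomorphism argument for $|G'|=2$ and the class count for $G_m$ both check out), and in the ``only if'' direction you correctly obtain a normal abelian Hall $2'$-subgroup $H$, nilpotency of $G$ when $d(G)>1/2$, and the dichotomy between the nilpotent and non-nilpotent cases. However, the argument stops exactly where the real content of the lemma lies, and you say so yourself: you do not prove that a non-abelian $2$-group $P$ with $d(P)>1/2$ has $|P'|=2$ with $P/\Center(P)$ elementary abelian, nor that the non-nilpotent case with $d(G)=1/2$ forces $G\cong G_m\times A$. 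These are not routine sharpenings. The elementary bound $d(P)\leq 1/2+|\Center(P)|/(2|P|)$ does \emph{not} force every non-central class to have size $2$: a priori a $2$-group could have classes of size $4$ and still satisfy $d(P)>1/2$ provided $|\Center(P)|$ exceeds twice the number of such classes, so ruling this out genuinely requires a new idea. Likewise, in the $d(G)=1/2$ case, pinning the action of $P$ on $H$ down to inversion on a single $C_3$ factor is asserted as a ``goal'' rather than carried out.

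For comparison, the paper does not prove this lemma either; it cites it as a combination of Theorem~3.1 of \cite{Lescot2} and Corollary~3.2 of \cite{Lescot}, where the classification is obtained via isoclinism: one shows that a group with $d(G)\geq 1/2$ is isoclinic to the trivial group, to an extraspecial $2$-group, or to $\Sym_3$, and that $d$ is an isoclinism invariant. If you intend your argument to stand as a self-contained proof, you must either reproduce that isoclinism analysis or supply an elementary argument for the two missing structural claims; as written, the proposal is an outline with the hardest steps explicitly deferred, not a proof.
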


\begin{proof}
This is a combination of  Theorem 3.1 in \cite{Lescot2} and Corollary 3.2 in \cite{Lescot}.
\end{proof}

It follows from Lemma \ref{lem: classification} that there is no $2$-groups $G$ with $d(G)=1/2$. Also, if $G$ is of odd order with $d(G)\ge 1/2$, then $G$ is abelian and $d(G)=1$.

\begin{lem}\label{lem: quotients}
Let $G$ be a finite group and let $p$ be a prime. If $N\unlhd G$ is a $p'$-subgroup, then $k_p(G)=k_p(G/N)$ and so $d_p(G)=d_p(G/N)$.
\end{lem}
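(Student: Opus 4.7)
The plan is to show that the natural projection $\pi : G \to G/N$ induces a bijection between the $G$-conjugacy classes of $p$-elements of $G$ and the $(G/N)$-conjugacy classes of $p$-elements of $G/N$. Since $\pi$ commutes with conjugation and sends $p$-elements to $p$-elements, the map $\Psi : x^G \mapsto \pi(x)^{G/N}$ is well defined. Once $\Psi$ is shown to be a bijection, the equality $k_p(G) = k_p(G/N)$ follows; combined with $|G|_p = |G/N|_p$ (true because $N$ is a $p'$-group), this yields $d_p(G) = d_p(G/N)$.

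For surjectivity of $\Psi$, let $\bar{g} \in G/N$ be any $p$-element and pick any preimage $g \in G$. Write $g = g_p g_{p'}$ as the product of its commuting $p$- and $p'$-parts. Then $\bar g = \pi(g_p)\pi(g_{p'})$ is the corresponding commuting decomposition in $G/N$; because $\bar g$ is itself a $p$-element, the $p'$-part $\pi(g_{p'})$ must be trivial, so $\bar g = \pi(g_p)$ and the $p$-element $g_p$ lifts $\bar g$.

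For injectivity, suppose $x, y \in G$ are $p$-elements with $\pi(x) \sim_{G/N} \pi(y)$. After replacing $x$ by a suitable $G$-conjugate $x^u$, we may assume $\pi(x) = \pi(y)$, so $y = xn$ for some $n \in N$. Set $H = \langle x \rangle N$, which also equals $\langle y \rangle N$. Since $\langle x \rangle$ is a $p$-group and $N$ is a $p'$-group, $\langle x \rangle \cap N = 1$, so the restriction of $\pi$ to $\langle x \rangle$ is injective and $|\pi(x)| = |x|$; in particular $\langle x \rangle \in \Syl_p(H)$, and similarly $\langle y \rangle \in \Syl_p(H)$. By Sylow's theorem in $H$, we have $\langle y \rangle = \langle x \rangle^h$ for some $h \in H$, hence $y = (x^h)^k$ for some integer $k$. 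Projecting to $H/N = \langle \pi(x) \rangle$, which is abelian, gives $\pi(x) = \pi(y) = \pi(x)^k$, so $\pi(x)^{k-1} = 1$; thus $|x| = |\pi(x)|$ divides $k-1$, whence $x^k = x$ and $y = x^h$, as required.

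The only delicate point is the final injectivity step: Sylow's theorem inside $H$ only conjugates the cyclic subgroups $\langle x \rangle$ and $\langle y \rangle$, and one must upgrade this to conjugacy of the specific generators $x$ and $y$. The observation that makes this go through is $|x| = |\pi(x)|$, which converts the relation $\pi(x) = \pi(x)^k$ directly into $x = x^k$.
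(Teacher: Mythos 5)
Your proof is correct, but it takes a genuinely different route from the paper. The paper's argument is two lines: it notes that $|G|_p=|G/N|_p$, invokes Lemma \ref{lem1}(2) (the submultiplicativity $d_\pi(G)\leq d_\pi(G/N)d_\pi(N)$, quoted from Fulman--Guralnick) together with $d_p(N)=1$ to get $k_p(G)\leq k_p(G/N)$, and dismisses the reverse inequality as obvious. You instead construct an explicit bijection between classes: your surjectivity step (lifting a $p$-element of $G/N$ via the commuting $p$-part/$p'$-part decomposition) is exactly the paper's ``obvious'' direction, while your injectivity step replaces the appeal to the cited inequality. That injectivity argument is the real content, and it is sound: after reducing to $\pi(x)=\pi(y)$, you work inside $H=\langle x\rangle N=\langle y\rangle N$, observe that $\langle x\rangle$ and $\langle y\rangle$ are Sylow $p$-subgroups of $H$ (since $\langle x\rangle\cap N=1$), apply Sylow's theorem, and then correctly upgrade conjugacy of the cyclic subgroups to conjugacy of the generators using $|x|=|\pi(x)|$ and the fact that $H/N$ is abelian. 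What each approach buys: the paper's is shorter given the toolkit it has already assembled, whereas yours is fully self-contained (no dependence on the Fulman--Guralnick lemma), is elementary, and yields slightly more, namely an explicit class bijection rather than just an equality of counts; in effect you have reproved the special case of Lemma \ref{lem1}(2) that the paper needs here.
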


\begin{proof}
Let $N$ be a normal $p'$-subgroup of $G$ and let $P\in\Syl_p(G)$. Write $\overline{G}=G/N$ and use the `bar' notation. Since $p\nmid |N|$, the Sylow $p$-subgroups of $G$ and $\overline{G}$ have the same order, thus it suffices to show that $k_p(G)=k_p(\overline{G})$.
By Lemma \ref{lem1}(2), we have $d_p(G)\leq d_p(\overline{G})$ since $d_p(N)\leq 1.$ It follows that $k_p(G)\leq k_p(\overline{G})$. The other direction is obvious.
\end{proof}

\section{Conjugacy classes of $2$-elements}\label{sec3}
We will prove Theorems A, B and C in this section. Recall that a $p$-group $P$ is said to be extra-special if $P'=\Phi(P)=\Center(P)$ and $|\Center(P)|=p.$ The following lemma is key to our proofs.

\begin{lem}\label{lem4} Let $G$ be a finite  group and let $P$ be a Sylow $2$-subgroup of $G.$  Suppose that $\OO_{2'}(G)=\Center(G)=1$ and $|P|>1$. Then $k_2(G)\leq {|P|}/{2}$ and $d_2(G)\leq 1/2$. Moreover, if $k_2(G)=|P|/2$, then  one of the following holds.
\begin{enumerate}[$(1)$]
\item  $k_2(G)=2$ and $P$ is elementary abelian of order $4$; or 
\item $k_2(G)>2$ and $P$ is an extra-special group  of order $2^{1+2m}$  for some integer $m\ge 1$ with $\Center(P)=\langle z\rangle$ a cyclic group of order $2$. Moreover, $|z^G\cap G|=3$  and for any $1\neq y\in P$ with $ y\not\in z^G$, $|y^G\cap P|=2$.
\end{enumerate}
\end{lem}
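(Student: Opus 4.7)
The plan rests on the Glauberman $Z^*$-theorem (Lemma \ref{lem2}) together with the classification of finite groups of commuting probability at least $1/2$ (Lemma \ref{lem: classification}), and finishes by exploiting the oddness of $|\Normalizer_G(P)/P|$.

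First, since $\OO_{2'}(G)=\Center(G)=1$ we have $\Center_2^*(G)=1$, so Lemma \ref{lem2} shows $|x^G\cap P|\geq 2$ for every $1\neq x\in P$. The sets $x^G\cap P$ (as $x$ ranges over representatives of the conjugacy classes of $2$-elements) partition $P$, so $|P|\geq 1+2(k_2(G)-1)$, giving $k_2(G)\leq (|P|+1)/2$. Since $|P|$ is even this yields $k_2(G)\leq |P|/2$ and $d_2(G)\leq 1/2$. When equality holds, the slack of $1$ in this inequality forces exactly one non-identity class (call it $z^G$, with $z\in P$) to satisfy $|z^G\cap P|=3$, and every other non-identity class to satisfy $|y^G\cap P|=2$. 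Moreover $d(P)\geq d_2(G)=1/2$, and since $P$ is a non-trivial $2$-group, Lemma \ref{lem: classification} rules out its case (iii) and leaves only the possibilities that $P$ is abelian or that $|P'|=2$ with $P/\Center(P)\cong(\Z/2)^{2m}$ for some $m\geq 1$.

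In the abelian case I set $A=\Normalizer_G(P)/\Centralizer_G(P)$, which acts faithfully on $P$ and realises the $G$-fusion in $P$ by Burnside's fusion lemma (Lemma \ref{lem: fusion}(1)). Since $P$ is a Sylow $2$-subgroup of $G$ lying in $\Centralizer_G(P)$, both $|\Normalizer_G(P)/P|$ and $|\Centralizer_G(P)/P|$ are odd, so $|A|$ is odd. Every $A$-orbit on $P$ then has odd size, and the prescribed $|P|/2-2$ orbits of size~$2$ must therefore be absent, forcing $|P|=4$. Since $A$ realises an orbit of size~$3$ and $\Aut(\Z/4)=\Z/2$ has no element of order~$3$, $P$ is elementary abelian of order~$4$, giving conclusion~(1).

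In the non-abelian case, let $p'$ be the non-identity element of $P'\leq\Center(P)$; as $P'$ is characteristic in $P$ it is fixed by $\Normalizer_G(P)$, and Burnside's fusion lemma gives $p'^G\cap\Center(P)=\{p'\}$. The alternative $|p'^G\cap P|=2$ is ruled out (the second conjugate would either be central, contradicting $p'^G\cap\Center(P)=\{p'\}$, or non-central and hence in a $P$-orbit of size $\geq 2$ disjoint from $\{p'\}$, forcing $|p'^G\cap P|\geq 3$), so $|p'^G\cap P|=3$; hence I may take $z=p'$ and $P'=\{1,z\}$, and the same reasoning rules out the $P$-orbit structure $\{1,1,1\}$ for $z^G\cap P$, leaving $z^G\cap P=\{z,a,b\}$ with $z\in\Center(P)$ and $\{a,b\}$ a $P$-orbit of non-central involutions. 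It remains to prove $\Center(P)=\{1,z\}$, for then $P/\Center(P)$ elementary abelian with $P'\leq\Phi(P)\leq\Center(P)$ forces $\Phi(P)=P'=\Center(P)$, so $P$ is extra-special of order $2^{1+2m}$ as required in~(2). The conjugation action of $\Normalizer_G(P)$ on $\Center(P)$ factors through a quotient of $\Normalizer_G(P)/P$ (since $P\leq\Centralizer_G(\Center(P))$); as $|\Normalizer_G(P)/P|$ is odd, every $\Normalizer_G(P)$-orbit on $\Center(P)$ has odd size. However, for any $y\in\Center(P)\setminus\{1,z\}$, $y^G$ is a size-$2$ class whose $P$-orbit decomposition forces $y^G\cap P=\{y,y'\}\subseteq\Center(P)$, and Burnside's fusion lemma then produces a $\Normalizer_G(P)$-orbit of even size~$2$ in $\Center(P)$, contradicting the previous sentence. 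Hence $\Center(P)=\{1,z\}$. I expect the main obstacle to be this final oddness trick---in particular, recognising that the constraints pin $y^G\cap P$ inside $\Center(P)$, where the parity of orbit sizes can be played against the odd order of the $\Normalizer_G(P)/P$ action.
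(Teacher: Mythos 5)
Your proposal is correct and follows essentially the same route as the paper: Glauberman's $Z^*$-theorem plus the counting argument for the bound, Lescot's classification to constrain $P$, and Burnside's fusion lemma combined with the oddness of $|\Normalizer_G(P)/P|$ to force $|P|=4$ in the abelian case and $\Center(P)=P'$ in the non-abelian case. The only differences are cosmetic reorganizations (you treat the abelian case uniformly rather than splitting off $k_2(G)=2$, and you exclude a size-$2$ class meeting $\Center(P)$ directly rather than via uniqueness of the size-$3$ class), but the underlying ideas coincide with the paper's proof.
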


\begin{proof} 
The hypothesis of the lemma yields that $\Center_2^*(G)=1$. Let $P$ be a Sylow $2$-subgroup of $G.$ By Lemma \ref{lem2}, if $1\neq x\in P$, then $|x^G\cap P|\ge 2.$
Let $k=k_2(G)-1$ be the number of nontrivial conjugacy classes of
$2$-elements in $G.$ Clearly, we can choose a  complete set $\Gamma=\{x_i\}_{i=1}^k$ of representatives for
all nontrivial conjugacy classes of $2$-elements in $G$ such that $\Gamma\subseteq P\setminus\{1\}.$
Notice that $k\ge 1$ as otherwise $P$ is trivial.

Observe that $|x_i^G\cap P|\geq 2$ for all $i$ with $1\leq i\leq k,$
 $P\setminus\{1\}=\cup_{i=1}^k x_i^G\cap P$ and $x_i^G\cap
x_j^G=\emptyset\text{ for all } 1\leq i\neq j\leq k$. We have that
$$|P\setminus\{1\}|=\sum_{i=1}^k |x_i^G\cap P|\geq \sum_{i=1}^k 2= 2k.$$ Hence
$|P|-1\geq 2k$ and so $2k\leq |P|-2$ since $|P|-1$ is odd. Thus
$k_2(G)=k+1\leq {|P|}/{2}$ and $d_2(G)\leq 1/2$ as wanted. 

Next, assume that $k_2(G)=|P|/2$.  We know that $|x_i^G\cap P|\ge 2$ for all $i=1,2,\ldots, k.$
If $k=1$, then $k_2(G)=2$ and $|P|=4$, so $|x_1^G\cap P|=3$ and $P$ is elementary abelian of order $4$. Thus part (1) holds. Assume that $k\ge 2.$ Since $|x_i^G\cap P|\ge 2$ for every $i$ and $k=|P|/2-1$, we obtain that $|x_j^G\cap P|=3$ for a unique index $j$ and $|x_i^G\cap P|=2$ for all $1\leq i\neq j\leq k.$ So we may assume that $|x_1^G\cap P|=3$ and $|x_i^G\cap P|=2$ for $2\leq i\leq k.$ 

By Corollary \ref{cor}, we have $1/2=d_2(G)\leq d(P)$ and thus either $P$ is abelian or $|P'|=2,d(P)=(1+4^{-m})/2$ and $P/\Center(P)$ is elementary abelian of order $4^m$ by Lemma \ref{lem: classification}. We claim that $P$ is non-abelian. By way of contradiction, assume that $P$ is abelian. Let $H=\Normalizer_G(P)$.  By Lemma \ref{lem: fusion}(1), $H$ controls $G$-fusion in $P$ (since $P$ is abelian) and so $x_k^G\cap P=x_k^H$.   Moreover, as $P$ is abelian, $P\leq \Centralizer_H(x_k)\leq H$ and hence $|x_k^H|\ge 1$ is odd. However $|x_k^H|=|x_k^G\cap P|=2$ by the result in the previous paragraph, which is a contradiction. 

Next, we claim that $P$ is extra-special. It suffices to show that $\Center(P)=P'$. Write $P'=\langle z\rangle$. Since $|P'|=2$, we have $P'\leq \Center(H)\cap\Center(P)$. Let $1\neq u\in \Center(P)$. We claim that $|u^G\cap P|=3$. Assume by contradiction that $u^G\cap P=\{u,v\}$, where $u\neq v\in P.$ If $v\in\Center(P)$, then $v=u^h$ for some $h\in H$ by Lemma \ref{lem: fusion}(1). It follows that $|u^H|=2$ which is impossible as $P\leq \Centralizer_H(u)\leq H$. Thus $v\not\in \Center(P)$. Now $u^G\cap P=v^G\cap P=\{u,v\}$. Since $v\in P\setminus \Center(P)$, we have $|v^P|>1$ whence $v^P=\{u,v\}$.  In particular $u=v^t$ for some $t\in P.$ Hence $v=u^{t^{-1}}=u$ as $u\in\Center(P)$. This contradiction shows that $|u^G\cap P|=3$ for every $1\neq u\in\Center(P)$. In particular,  $|z^G\cap P|=3.$ Now if $\Center(P)\neq P'$, then we can choose $u\in\Center(P)\setminus P'$ and by our previous claim, $|u^G\cap P|=3$. It follows that $u$ and $z$ are $G$-conjugate  as there is only one class of $2$-elements satisfying the previous condition. Again this is a contradiction by using Lemma \ref{lem: fusion}(1)  and the fact that $z\in\Center(H)$. The proof is now complete.
\end{proof}

We are now ready to prove our first theorem.
\begin{proof}[\textbf{Proof of Theorem A}] 
Let $G$ be a finite group. Assume first that $G$ has a normal $2$-complement and $d(P)>1/2$ for some Sylow $2$-subgroup $P$ of $G$. By Corollary \ref{cor}(1), $d_2(G)=d(P)>1/2$.
Conversely, assume that $d_2(G)>1/2$. Let $P$ be a Sylow $2$-subgroup of $G$. If $G$ has a normal $2$-complement, then $d(P)=d_2(G)>1/2$. Thus we only need to show that $G$ has a normal $2$-complement. We proceed by induction on $|G|$. Observe that if $N$ is a proper nontrivial normal subgroup of $G$, then $d_2(N)$ and $d_2(G/N)$ are strictly larger than $1/2$ by Lemma \ref{lem1}(2). By induction, both $N$ and $G/N$ have normal $2$-complements. Hence if $N$ is of odd order or $G/N$ is a $2$-group then $G$ has a normal $2$-complement and we are done. Therefore, we may assume that $\OO_{2'}(G)=1$ and $G=\OO^{2}(G)$.

 Suppose  that $\Center(G)$ is nontrivial.   As $\OO_{2'}(G)=1$, $\Center(G)$ must be a $2$-group. Now $G/\Center(G)$ has a normal $2$-complement, say $K/\Center(G)\unlhd G/\Center(G)$, for some normal subgroup $K$ of $G$ with $\Center(G)\leq K$. Hence $\Center(G)\unlhd K\unlhd G$ and $G/K$ is a $2$-group. Since $G=\OO^{2}(G)$, we obtain that $G=K$. We now see that $\Center(G)$ is a normal Sylow $2$-subgroup of $G$ and thus $G$ has a normal $2$-complement by Lemma \ref{lem: fusion}(2). So we may assume that $\Center(G)=1.$
Now Lemma \ref{lem4} yields a contradiction. 
\end{proof}


We now study the structure of finite groups $G$ with $d_2(G)=1/2.$ We first consider the solvable case.

\begin{lem}\label{lem:2-class-solvable}
Let $G$ be a finite solvable group. Suppose that $\OO_{2'}(G)=1$ and $G=\OO^2(G)$. Then $d_2(G)=1/2$ if and only if $G\cong \Alt_4$.
\end{lem}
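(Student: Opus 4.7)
The forward direction is immediate: for $G = \Alt_4$ with $P \cong V_4$, the three involutions form a single conjugacy class so $k_2(\Alt_4) = 2 = |P|/2$, while $\OO_{2'}(\Alt_4) = 1$ and $\Alt_4$ has no $2$-power-index subgroup, so $\Alt_4 = \OO^2(\Alt_4)$. For the converse I will argue by induction on $|G|$, in two steps: first reduce to $\Center(G) = 1$, then apply Lemma \ref{lem4}.

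\emph{Reduction to $\Center(G) = 1$:} Since $\OO_{2'}(G) = 1$, any nontrivial $\Center(G)$ is a $2$-group. By Lemma \ref{lem1}(2), $d_2(G/\Center(G)) \geq d_2(G) = 1/2$. If the inequality is strict, Theorem A gives a normal $2$-complement in $G/\Center(G)$; lifting via Schur--Zassenhaus (using centrality) and then invoking $\OO_{2'}(G) = 1$ forces $G$ to be a $2$-group, so $G = \OO^2(G)$ forces $G = 1$, contradicting $d_2(G) = 1/2$. So $d_2(G/\Center(G)) = 1/2$; the other hypotheses persist in the quotient, and induction gives $G/\Center(G) \cong \Alt_4$. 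Writing $c = |\Center(G)|$, $P$ is normal in $G$ (preimage of $V_4$), so $G = P \rtimes T$ with $|T| = 3$. If $P$ is abelian, Maschke's theorem decomposes $P = \Center(G) \oplus [T, P]$ with $[T, P] \cong V_4$ (the unique faithful irreducible $\mathbb{F}_2 T$-module), so $G^{\mathrm{ab}} \cong \Center(G) \times T$ has order $3c$; demanding $|G^{\mathrm{ab}}|$ odd forces $c = 1$, contradicting $\Center(G) \neq 1$. If $P$ is non-abelian, $d(P) > 1/2$ (else $G$ is $2$-nilpotent and trivial), so Lemma \ref{lem: classification}(ii) applies (no $2$-group realizes case (iii)); the constraints $|P/\Center(G)| = 4$ and $\Center(G) \leq \Center(P)$ pin down $m = 1$, $\Center(P) = \Center(G)$, $d(P) = 5/8$, $k(P) = 5c/2$. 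The analogous Maschke decomposition on $P/P'$ yields $|G^{\mathrm{ab}}| = 3c/2$, forcing $c = 2$; but equating $T$-orbits on $P$-classes with $k_2(G) = 2c$ gives $b = (k(P) - k_2(G))/2 = c/4$ orbits of size $3$, which must be an integer, forcing $c \geq 4$ --- contradiction.

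\emph{Applying Lemma \ref{lem4}:} In Case $(1)$ we have $P \cong V_4$; solvability with $\OO_{2'}(G) = 1$ gives $\Fit(G) = \OO_2(G) \neq 1$, and the transitive $G$-action on the three involutions of $P$ rules out $\OO_2(G) \cong C_2$, so $\OO_2(G) = P$, $\Centralizer_G(P) = P$, and $G/P \hookrightarrow \Aut(V_4) = \Sym_3$ of odd order lies in $\{1, C_3\}$. The trivial option would make $G = P$ abelian with $\Center(G) = P \neq 1$, so $G/P \cong C_3$ and $G \cong V_4 \rtimes C_3 = \Alt_4$.

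The main obstacle is Case $(2)$, where $P$ is extra-special of order $2^{1+2m}$ with $\Center(P) = \langle z \rangle$ and $|z^G \cap P| = 3$. Here my plan is: $z \in \Center(P)$ centralizes $\OO_2(G) \leq P$, and $\Centralizer_G(\OO_2(G)) \leq \OO_2(G)$ by solvability, so $z \in \OO_2(G)$; hence $z^G$ lies in the $G$-normal subgroup $\Center(\OO_2(G))$, making $N := \langle z^G \rangle$ elementary abelian. The conjugation action $G \to \Sym(z^G) = \Sym_3$ has image $C_3$ (else a $C_2$ quotient contradicts $G = \OO^2(G)$), so $K := \Centralizer_G(z) = \Centralizer_G(N)$ has index $3$ in $G$. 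If $N$ had rank $3$, the $C_3$-fixed element $z_1 z_2 z_3$ would give a nontrivial element of $\Center(G)$, contradicting $\Center(G) = 1$; so $N \cong V_4$. But $P \leq K$ (since $z \in \Center(P)$), and any $z_2 \in z^G \cap P$ with $z_2 \neq z$ lies in $N \subseteq \Center(K)$, so $z_2$ centralizes $P$, forcing $z_2 \in \Center(P) = \{1, z\}$ --- a contradiction.
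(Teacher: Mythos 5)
Your proof is correct, and while the outer skeleton (induction, showing $G/Z\cong\Alt_4$ for nontrivial central $Z$, splitting on whether $P$ is abelian) matches the paper, your treatment of the non-abelian Sylow case is genuinely different. The paper never invokes part (2) of Lemma \ref{lem4} here: instead it passes to the series $\OO_{2,2'}(G)\leq\OO_{2,2',2,2'}(G)=G$ via Hall--Higman, identifies $\SL_2(3)$ explicitly in one subcase, and in the other derives $d_2(G/\OO_{2,2'}(G))=1$ to force a contradiction. You instead first kill the center outright --- in the non-abelian subcase by pinning down $m=1$, computing $|G^{\mathrm{ab}}|=3c/2$ to force $c=2$, and then counting $T$-orbits on $P$-classes to force $4\mid c$ --- and only then apply Lemma \ref{lem4}. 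Your disposal of the extra-special case is the most interesting divergence: from $z\in\Center(P)\leq\Centralizer_G(\OO_2(G))\leq\OO_2(G)$ you get $z^G\subseteq\Center(\OO_2(G))$, so $N=\langle z^G\rangle$ is elementary abelian and normal, the action on the three conjugates factors through $C_3$ (as $G=\OO^2(G)$), and then $z_2\in N\leq\Center(\Centralizer_G(N))$ together with $P\leq\Centralizer_G(z)=\Centralizer_G(N)$ forces $z_2\in\Center(P)=\langle z\rangle$, a contradiction. This buys you a proof that avoids both the $\OO_{2,2'}$ machinery and the identification of $\SL_2(3)$, at the cost of needing the center-free reduction up front (the rank computation of $N$ is actually superfluous for your final contradiction). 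Two cosmetic points: the persistence of $\OO_{2'}(G/Z)=1$ deserves the one-line Burnside argument the paper gives (it is the same argument you already use to rule out $d_2(G/Z)>1/2$), and the decomposition $P=\Centralizer_P(T)\times[P,T]$ is the coprime-action (Fitting) decomposition rather than Maschke, since $P$ need not be elementary abelian; neither affects correctness.
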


\begin{proof}
If $G\cong \Alt_4$, then $d_2(G)=1/2$ as $k_2(G)=2$ and $|P|=4$. Conversely, assume that $d_2(G)=1/2$. We proceed by induction on $|G|$. By Corollary \ref{cor}, we have $1/2=d_2(G)\leq d(P)$. Lemma \ref{lem: classification} yields that either $P$ is abelian or $|P'|=2$ and $P/\Center(P)$ is elementary abelian of order $4^m.$ In both cases, $d(P)>1/2.$ It follows that $G$ is not a $2$-group and so $P$ is non-cyclic by Corollary 5.14 in \cite{Isaacs}. As $\OO_{2'}(G)=1$, $\Centralizer_G(\OO_2(G))\subseteq \OO_2(G)$ by \cite[Theorem 3.21]{Isaacs}, hence $\Center(G)\leq P.$ 

We claim that $G/Z$ satisfies the hypothesis of the lemma for any central subgroup $Z\leq \Center(G)$. Clearly, $\OO^2(G/Z)=G/Z$ since $G=\OO^2(G)$. Next, assume that  $K/Z=\OO_{2'}(G/Z)$, where $Z\leq K\unlhd G$. Then $K$ has a central Sylow $2$-subgroup $Z$ and so by Lemma \ref{lem: fusion}(2), $K$ has a normal $2$-complement $\OO_{2'}(K)$. Since $K\unlhd G,$ $\OO_{2'}(K)\leq \OO_{2'}(G)=1$. Hence $K=Z$ and so $\OO_{2'}(G/Z)=1$. 

By Lemma \ref{lem1}(2), we have $1/2=d_2(G)\leq d_2(G/Z)d_2(Z)=d_2(G/Z)$. If $d_2(G/Z)>1/2$, then $G/Z$ has a normal $2$-complement by Theorem A but this would imply that $G/Z$ is a $2$-group and so $G$ is a $2$-group, a contradiction. Hence $d_2(G/Z)=1/2.$ Therefore, by using induction on $|G|$, if $Z$ is nontrivial, then $G/Z\cong \Alt_4.$ We now consider two cases separately, according to whether  $P$ is abelian or not. 

\textbf{Case $1$}: $P $ is abelian. As $\Centralizer_G(\OO_2(G))\subseteq \OO_2(G)$, we have $P=\OO_2(G)$ and so $P=\Centralizer_G(P)\unlhd G$. Clearly,  $P\neq \Center(G)$, as otherwise $G$ is a $2$-group by applying Lemma \ref{lem: fusion}(2). Thus $|P:\Center(G)|\ge 2$.


Assume first that $\Center(G)=1$. By Lemma \ref{lem4}, $P$ is elementary abelian of order $4$ and $k_2(G)=2$. As $P=\Centralizer_G(P)\unlhd G,$ $G/P$ embeds into $\GL_2(2)\cong\Sym_3$. Since $G/P$ is of odd order and nontrivial, $G/P\cong C_3$. It is not hard to see that $G\cong\Alt_4$.

Next, assume that $\Center(G)$ is nontrivial.  Then $G/\Center(G)\cong\Alt_4$. 
By  \cite[Theorem 5.18]{Isaacs}, $G'\cap \Center(G)=G'\cap P\cap \Center(G)=1$. Let $R$ be a Sylow $3$-subgroup of $G$. Then $G=PR,|R|=3$ and $R$ acts nontrivially and coprimely on $P$; hence $\Center(G)= \Centralizer_P(R)$ and $[P,R]= G'\leq P$. Since $R$ acts coprimely on $P,$ we have $P=[P,R]\times \Centralizer_P(R)=G'\times \Center(G).$ Moreover, $G'R\unlhd G$ and $|G/G'R|$ is a $2$-power, so $G=G'R$ forcing $\Center(G)=1$, a contradiction.

\medskip
\textbf{Case $2$}: $P $ is non-abelian. We have $P'\leq \Center(P)\leq \Centralizer_G(\OO_2(G))\leq \OO_2(G)$. Observe that $G/\OO_{2,2'}(G)$ has an abelian Sylow $2$-subgroup, so $G/\OO_{2,2'}(G)$ has a normal Sylow $2$-subgroup by using Hall-Higman Lemma 1.2.3 (\cite[Theorem 3.21]{Isaacs}); hence $G=\OO_{2,2',2,2'}(G)$. (For the definitions of $\OO_{2,2'}(G)$ and  $\OO_{2,2',2,2'}(G)$,  see \cite[6.3]{Gorenstein}.)

Assume first that $G=\OO_{2,2'}(G)$. Then $\Centralizer_G(P)\leq P\unlhd G.$ It follows that $P'\leq \Center(G)$ as $|P'|=2$.  Thus $G/P'\cong \Alt_4$ and $|G|=24$. It is easy to check that $G\cong 2\cdot \Alt_4\cong\SL_2(3)$ as $G=\OO^2(G)$. However, $d_2(\SL_2(3))=3/8<1/2.$

Assume that  $G/\OO_{2,2'}(G)$ is nontrivial.  Let $L=\OO_{2,2'}(G)\unlhd G.$  Since $\OO_{2'}(L)=1$, by using Theorem A and Lemma \ref{lem1}(2), we see that $d_2(L)= 1/2$.  But then this forces $d_2(G/L)=1$. By Corollary \ref{cor}(2), $G/L$ has a normal $2$-complement and since $G=\OO^2(G)$, we deduce that $G/L$ is a $2'$-group, forcing $G=L$, which is a contradiction.
This completes our proof.
\end{proof}

\begin{lem}\label{lem: solvable groups}
Let $G$ be a finite solvable group. Suppose that $\OO_{2'}(G)=1$. If $d_2(G)=1/2$, then $G/\Center(G)\cong\Alt_4$ or $\Sym_4.$
\end{lem}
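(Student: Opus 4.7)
The plan is to first identify $H := \OO^2(G)$ as $\Alt_4$, then analyze $C := \Centralizer_G(H)$, and finally distinguish two cases according to whether $G = HC$ or $[G:HC] = 2$.

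First I would apply Lemma \ref{lem1}(2) to $H \unlhd G$: since $G/H$ is a 2-group, we get $d_2(H) \geq d_2(G) = 1/2$. A strict inequality would, by Theorem A, give $H$ a normal 2-complement; combined with $\OO_{2'}(H) \leq \OO_{2'}(G) = 1$ and $\OO^2(H) = H$, this forces $H = 1$, so $G$ itself is a 2-group, contradicting Lemma \ref{lem: classification} (no 2-group has $d(G) = 1/2$). Thus $d_2(H) = 1/2$, and Lemma \ref{lem:2-class-solvable} identifies $H \cong \Alt_4$.

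Next I would analyze $C$. Since $\Center(H) = 1$ we have $HC = H \times C$; since $C$ embeds into the 2-group $G/H$, $C$ is a 2-group; and since $G/C$ embeds in $\Aut(H) = \Sym_4$ while containing $HC/C \cong \Alt_4$, we land in one of two cases: Case I ($G = HC$, so $G/C \cong \Alt_4$) or Case II ($[G:HC] = 2$, so $G/C \cong \Sym_4$). A second application of Lemma \ref{lem1}(2), now with $N = HC$, gives $1/2 = d_2(G) \leq d_2(HC) = d_2(H)\, d(C) = d(C)/2$, so $d(C) = 1$ and $C$ is abelian.

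In Case I, $\Center(G) = \Center(H) \times C = C$ and $G/\Center(G) \cong \Alt_4$ immediately. In Case II, fix $t \in G \setminus HC$; it acts on $C$ via some $\sigma \in \Aut(C)$ with $\sigma^2 = 1$ (because $t^2 \in HC$ centralizes $C$). The goal is to force $\sigma = 1$, for then $C \leq \Center(G)$ so $\Center(G) = C$ and $G/\Center(G) \cong G/C \cong \Sym_4$. I would prove this by computing $k_2(G)$ directly. Set $a := |\{c \in C : c^\sigma = c\}|$, let $V := \OO_2(H)$ be the Klein four subgroup of $H$, and set $D_\sigma := \{c'\,c'^{-\sigma} : c' \in C\}$; since $c \mapsto c\, c^{-\sigma}$ has kernel $\{c : c^\sigma = c\}$ and image $D_\sigma$, we have $|D_\sigma| = |C|/a$. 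The $4|C|$ 2-elements of $HC = H \times C$ split into $|C| + a$ $G$-classes (combining the two $H$-orbits $\{1\}$ and $V \setminus \{1\}$ on $V$ with the $\langle\sigma\rangle$-orbits on $C$, of which there are $(|C|+a)/2$). Every element of $HCt$ is a 2-element (its image in $\Sym_4$ is a transposition or 4-cycle, and $C$ is a 2-group); its $HC$-orbits are indexed by the two ``twisted'' conjugacy types on $H$ (matching the $\Alt_4$-classes of transpositions and 4-cycles in $\Sym_4$) together with the $a$ cosets of $D_\sigma$ in $C$. Since $c^\sigma c^{-1} \in D_\sigma$, conjugation by $t$ fixes each such $HC$-orbit setwise, so the $G$-classes in $HCt$ number exactly $2a$. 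Altogether $k_2(G) = (|C| + a) + 2a = |C| + 3a$, and since $|P| = 8|C|$, the hypothesis $d_2(G) = 1/2$ forces $a = |C|$, i.e., $\sigma = 1$.

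The main obstacle is the orbit analysis in Case II: one must verify (i) that twisted $H$-conjugation on $H$ always has exactly two orbits, handled by identifying them with the $H$-orbits on the coset $H\bar t \subset G/C \cong \Sym_4$, which are just the transpositions and the 4-cycles; and (ii) that $t$-conjugation preserves each $HC$-orbit on $HCt$ setwise, which follows from $c^\sigma c^{-1} \in D_\sigma$ together with the observation that $h\bar t$ and $h^{\bar t}\bar t = \bar t h$ always share a cycle type in $\Sym_4$.
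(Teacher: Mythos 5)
Your proposal is correct, and its first half coincides with the paper's proof: both identify $L=\OO^2(G)\cong\Alt_4$ via Theorem~A and Lemma~\ref{lem:2-class-solvable}, set $C=\Centralizer_G(L)$, deduce from $d_2(C)=1$ and $\OO_{2'}(C)=1$ that $C$ is an abelian $2$-group, and embed $G/C$ into $\Aut(\Alt_4)\cong\Sym_4$. Where you diverge is the final step, proving $C\leq\Center(G)$. The paper does this uniformly for both cases in two lines: since $G/L$ is a $2$-group with $d_2(G/L)=1$, it is abelian, so $G'\leq L$, hence $[P,C]\leq G'\cap C\leq L\cap C=1$; combined with $[L,C]=1$ and $G=PL$ this gives $C\leq\Center(G)$ with no case division. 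You instead split on $[G:HC]$ and, in the $\Sym_4$ case, carry out an explicit orbit count showing $k_2(G)=|C|+3a$ where $a$ is the number of fixed points of the outer involution $\sigma$ on $C$, so that $d_2(G)=1/2$ forces $a=|C|$. I checked the count: the $G$-classes in $HC$ do number $2\cdot(|C|+a)/2$, the $HC$-classes in the coset $HCt$ correspond to the two $\Alt_4$-classes of odd permutations times the $a$ cosets of $D_\sigma$, and $t$-conjugation preserves each of these setwise, so $k_2(G)=|C|+3a$ is right. Your route costs more bookkeeping but yields a small bonus the paper's does not: an exact formula for $k_2(G)$ in terms of the action of $G/HC$ on $C$, which quantifies how far $d_2(G)$ drops below $1/2$ when $\sigma\neq 1$. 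The paper's route is the one to prefer for brevity, since the observation that $d_2(G/L)=1$ forces $G'\leq L$ is already available from Corollary~\ref{cor}(2).
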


\begin{proof}  Suppose that  $G$ is a finite solvable group with $d_2(G)=1/2$ and $\OO_{2'}(G)=1.$ Let $L=\OO^2(G)$. Then $\OO_{2'}(L)=1$ and $L=\OO^{2}(L)$. By  Lemma \ref{lem1}(2), $d_2(L)\ge 1/2.$ If $d_2(L)>1/2$, then $L$ has a normal $2$-complement by Theorem A. However, as $\OO_{2'}(L)=1$, $L$ must be a $2$-group and hence $G$ is a $2$-group with $d_2(G)=d(G)=1/2$, which is impossible by Lemma \ref{lem4}. Therefore, $d_2(L)=1/2.$ Hence $L\cong \Alt_4$ by Lemma \ref{lem:2-class-solvable}. 

Let $C=\Centralizer_G(L)\unlhd G$. As $\Center(L)=1$, we have $C\cap L=1$. Then $\Alt_4\cong LC/C\unlhd G/C\leq \Aut(\Alt_4)=\Sym_4$. Hence $G/C \cong \Alt_4$ or $\Sym_4$. It remains to show that $C=\Center(G)$. Let $P$ be a Sylow $2$-subgroup of $G$. 

As $C\times L=CL\unlhd G$, we have $1/2=d_2(G)\leq d_2(CL)\leq d_2(L)d_2(C)=d_2(C)/2$ and so $d_2(C)=1$. Thus $C$ has a normal $2$-complement and an abelian Sylow $2$-subgroup. However, as $\OO_{2'}(C)\leq\OO_{2'}(G)=1$, $C$ must be an abelian $2$-group and $C\leq P$. We also have that $1/2=d_2(G)\leq d_2(G/L)d_2(L)=d_2(G/L)/2$, so $d_2(G/L)=1$ where $G/L$ is a $2$-group. It follows that $G/L$ is an abelian $2$-group. In particular, $G'\leq L$. Thus $[P,C]\subseteq G'\cap C\subseteq L\cap C=1$, so $[P,C]=1$. As $[L,C]=1$ and $G=PL$, we have $C\leq \Center(G)$. Since $\Center(G/C)$ is trivial, we must have $C=\Center(G)$ as wanted.
\end{proof}

We next classify all finite non-abelian simple groups $S$ such that $d_2(S)=1/2$.

\begin{lem}\label{lem: simple groups}
Let $S$ be a finite non-abelian simple group. Then $d_2(S)=1/2$ if and only if $S\cong \PSL_2(q)$ with $3<q\equiv 3,5$ $({mod}\: 8)$, where $q$ is a prime power.
\end{lem}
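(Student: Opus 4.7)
My plan is to prove both implications of the lemma.

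For the "if" direction, I would verify directly that $\PSL_2(q)$ has $d_2 = 1/2$ for prime powers $q$ with $3 < q \equiv 3, 5 \pmod 8$. The key calculation: the $2$-part of $|\PSL_2(q)| = q(q^2-1)/2$ is $v_2(q^2-1) - 1 = 2$ (since $v_2(q-1) + v_2(q+1) = 3$ in the given congruence range), so $|P| = 4$, and since $P$ is dihedral it must be Klein four. Because $\PSL_2(q)$ has a unique conjugacy class of involutions, $k_2(\PSL_2(q)) = 2$ and $d_2(\PSL_2(q)) = 1/2$. The case $q = 4$ is covered by the exceptional isomorphism $\PSL_2(4) \cong \PSL_2(5)$ with $5 \equiv 5 \pmod 8$.

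For the converse, let $S$ be a non-abelian finite simple group with $d_2(S) = 1/2$. Then $\OO_{2'}(S) = \Center(S) = 1$, so Lemma~\ref{lem4} applies and $S$ falls into one of two cases: (a) $P$ is Klein four with $k_2(S) = 2$; or (b) $P$ is extra-special of order $2^{1+2m}$ with $m \geq 1$, and the rigid fusion pattern holds, with the central involution $z$ satisfying $|z^G \cap P| = 3$ while every other non-identity $y \in P$ satisfies $|y^G \cap P| = 2$.

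Case (a) I would dispatch by invoking the Gorenstein--Walter classification (a pre-CFSG result) of non-abelian finite simple groups with dihedral Sylow $2$-subgroup; specialized to order $4$, this yields $S \cong \PSL_2(q)$ for prime powers $q$ for which $\PSL_2(q)$ has Klein-four Sylow $2$-subgroup, and by the computation above this forces $q \equiv 3, 5 \pmod 8$ with $q > 3$.

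Case (b) is the main obstacle and must be ruled out. For $m = 1$, the Sylow $P$ is $D_8$ or $Q_8$: the Brauer--Suzuki theorem eliminates $Q_8$; for $P = D_8$, Gorenstein--Walter again gives $S \cong \PSL_2(q)$ for suitable $q$ or $S \cong \Alt_7$, and a direct class count in each case yields $k_2(S) = 3$ (identity, one class of involutions, one class of order-$4$ elements), contradicting $k_2(S) = |P|/2 = 4$. For $m \geq 2$, I would leverage the extreme rigidity of the fusion pattern. The subgroup $\langle z^G \cap P \rangle$ is forced to be Klein four or $D_8$, since $[t_1, t_2] \in \Center(P) = \langle z\rangle$ for any $t_1, t_2 \in z^G \cap P \setminus \{z\}$; every element of order $4$ in $P$ has exactly one distinct $G$-conjugate in $P$; and $\Normalizer_G(P)/\Centralizer_G(P)$ embeds into the outer automorphism group of $P$ with a very constrained orbit structure on $P/\Center(P)$. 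Combining these with a count of involutions in $P$ (computable from the quadratic form on $P/\Center(P)$ that distinguishes the extra-special types $\pm$) and a careful Alperin-fusion analysis should force a contradiction, although one may need to invoke classical (pre-CFSG) classification results of Gilman--Gorenstein and others for simple groups with extra-special Sylow $2$-subgroup of order $\geq 32$. Ruling out case (b) for $m \geq 2$ is the heart of the proof.
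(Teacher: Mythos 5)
Your ``if'' direction, your reduction to the two cases of Lemma~\ref{lem4}, your handling of case (a) (via Gorenstein--Walter on dihedral Sylow $2$-subgroups rather than the paper's use of Walter's classification of simple groups with abelian Sylow $2$-subgroups -- either works), and your $m=1$ subcase of (b) are all sound. The genuine gap is the subcase $m\ge 2$ of case (b), which you yourself flag as ``the heart of the proof'' but do not actually prove: the observations you list (that $\langle z^G\cap P\rangle$ is Klein four or $D_8$, that order-$4$ elements have exactly one other $G$-conjugate in $P$, constraints on $\Normalizer_G(P)/\Centralizer_G(P)$) are correct but are not assembled into a contradiction, and ``a careful Alperin-fusion analysis should force a contradiction'' is a hope, not an argument. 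As written, a simple group with extra-special Sylow $2$-subgroup of order $2^{1+2m}$, $m\ge 2$, realizing the rigid fusion pattern has not been excluded.

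The repair is exactly the classification result you mention only parenthetically, and it disposes of all of case (b) uniformly rather than requiring a separate fusion argument for $m\ge 2$: an extra-special $2$-group has nilpotency class exactly $2$, so the Gilman--Gorenstein Main Theorem applies and $S$ must be one of $\PSL_2(q)$ with $q\equiv 7,9\ (\text{mod}\ 16)$, $\Alt_7$, $\textrm{Sz}(2^n)$, $\PSU_3(2^n)$, $\PSL_3(2^n)$, or $\textrm{PSp}_4(2^n)$ with $n\ge 2$. In the last four families the center of a Sylow $2$-subgroup has order at least $4$, so the Sylow $2$-subgroup is not extra-special; and for $\PSL_2(q)$ with $q\equiv 7,9\ (\text{mod}\ 16)$ and for $\Alt_7$ the Sylow $2$-subgroup is $D_8$ and a direct count gives $k_2(S)=3\neq 4=|P|/2$. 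You should either carry out this check explicitly or supply a complete self-contained fusion argument for $m\ge 2$; without one or the other the converse is incomplete.
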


\begin{proof} Let $S$ be a finite non-abelian simple with a Sylow $2$-subgroup $P.$ If  $S\cong \PSL_2(q)$ with $3<q\equiv 3,5$ ({mod}\: $8$), then $P$ is elementary abelian of order $4$ and $S$ has only one class of involutions so $k_2(S)=2$ and thus $d_2(S)=1/2.$ Conversely, assume that $S$ is a finite non-abelian simple group with $d_2(S)=1/2$. By Lemma \ref{lem4}, either $P$ is elementary abelian of order $4$ with $k_2(P)=2$ or $P$ is extra-special of order $2^{1+4m}$ for some integer $m\ge 1.$

Assume first that $P$ is elementary abelian of order $4$. It follows from \cite[Theorem I]{Walter} that $S$ is isomorphic to $\PSL_2(2^f), (f\ge 1)$, $\PSL_2(q)$ with $3<q\equiv 3,5 $ (mod $8$) ${}^2\textrm{G}_2(3^{2n+1}), n\ge 1$ or $\textrm{J}_1$.  Since $|P|=4$, we deduce that $S\cong\PSL_2(q)$ with $3<q\equiv 3,5 $ (mod $8$). Notice that $\PSL_2(4)\cong\PSL_2(5)$. 

Assume now that $P$ is extra-special of order $2^{1+4m},m\ge 1.$ In this case, $P$ is nilpotent of class $2$.  It follows from \cite[Main Theorem]{GG} that $S$ is isomorphic to one of the groups $\PSL_2(q)$ with with $q\equiv 7,9$ $(\text{mod}\: 16)$, $\Alt_7$, $\textrm{Sz}(2^n)$, $\PSU_3(2^n)$, $\PSL_3(2^n)$ or $\textrm{PSp}_4(2^n)$ with $n\ge 2.$ However, except for the first two groups, the centers of the Sylow $2$-subgroups of the remaining simple groups have order at least $4$. For $\Alt_7$, we can check that $k_2(\Alt_7)=3$  so $d_2(\Alt_7)=3/8$ as a Sylow $2$-subgroup of $\Alt_7$ is isomorphic to $\textrm{D}_8$. Similarly, the Sylow $2$-subgroup of $S=\PSL_2(q)$ with $q\equiv 7,9$ (mod $8$) is also isomorphic to $\textrm{D}_8$. Again, except for the identity, $S$ has two non-trivial classes of $2$-elements, one consisting of all involutions in $S$ and another consisting of elements of order $4$. Thus these cases cannot occur.
\end{proof}
For a finite group $G$, we denote by $\Sol(G)$ the solvable radical of $G$, that is, the largest solvable normal subgroup of $G$. 
\begin{lem}\label{lem:almost simple}
Let $G$ be a finite  group.  Suppose that $\Sol(G)=1$ and $d_2(G)=1/2$. Then $G$ is a finite almost simple group. 
\end{lem}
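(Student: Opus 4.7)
The plan is to show that $G$ has a unique minimal normal subgroup $S$, which is a non-abelian simple group, and that $\Centralizer_G(S)=1$; conjugation then embeds $G$ into $\Aut(S)$ and exhibits $G$ as almost simple. Since $\Sol(G)=1$, every minimal normal subgroup $M$ of $G$ is non-abelian and characteristically simple, so $M\cong T^m$ for some non-abelian simple group $T$ and integer $m\geq 1$.

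First I would show $m=1$. For any non-abelian simple group $T$ we have $\OO_{2'}(T)=\Center(T)=1$ and $|T|$ is even by Feit--Thompson, so Lemma \ref{lem4} applies and gives $d_2(T)\leq 1/2$. Since conjugacy classes in a direct product decompose as products of classes of the factors, and the $2$-part of the order is multiplicative, we obtain $d_2(M)=d_2(T)^m\leq (1/2)^m$. Lemma \ref{lem1}(2) with $N=M$ then yields
\[
\tfrac{1}{2}=d_2(G)\leq d_2(G/M)\, d_2(M)\leq (1/2)^m,
\]
forcing $m=1$. Hence every minimal normal subgroup of $G$ is a non-abelian simple group.

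Next I would establish uniqueness: if $M_1\neq M_2$ were two minimal normal subgroups, minimality would force $M_1\cap M_2=1$, so $N:=M_1M_2\cong M_1\times M_2$, giving $d_2(N)=d_2(M_1)\,d_2(M_2)\leq 1/4$; a second application of Lemma \ref{lem1}(2) yields $d_2(G)\leq 1/4$, contradicting $d_2(G)=1/2$. So $G$ has a unique minimal normal subgroup $S$, which is non-abelian simple. Finally $\Centralizer_G(S)$ is normal in $G$ and meets $S$ trivially (since $\Center(S)=1$); as every nontrivial normal subgroup of a finite group contains a minimal normal one, this forces $\Centralizer_G(S)=1$. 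Conjugation therefore embeds $G$ into $\Aut(S)$ with $S\unlhd G$, so $G$ is almost simple. The only mildly technical step is the multiplicativity $d_2(T^m)=d_2(T)^m$, which follows routinely from the product structure of conjugacy classes and Sylow subgroups, so I foresee no substantive obstacle.
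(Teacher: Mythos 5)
Your proposal is correct. The first half coincides with the paper's argument: both use Lemma \ref{lem1}(2) together with Lemma \ref{lem4} (via Feit--Thompson to ensure $|T|$ is even) to force the minimal normal subgroup to be a single non-abelian simple group with $d_2=1/2$. You diverge in how you kill the centralizer. The paper applies Lemma \ref{lem1}(2) to $C\times M\unlhd G$ to get $d_2(C)=1$, invokes Corollary \ref{cor}(2) and Feit--Thompson to conclude $C$ is solvable, and then uses $\Sol(G)=1$. You instead prove uniqueness of the minimal normal subgroup by a second $d_2$ computation on $M_1\times M_2$ (giving $d_2\leq 1/4$), and then deduce $\Centralizer_G(S)=1$ from the purely structural fact that a nontrivial normal subgroup must contain the unique minimal normal subgroup $S$, which it cannot since $\Center(S)=1$. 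Both routes are sound and of comparable length; yours trades the appeal to Corollary \ref{cor}(2) for an extra (routine) multiplicativity computation, and note that for your purposes the inequality $d_2(T^m)\leq d_2(T)^m$ obtained by iterating Lemma \ref{lem1}(2) already suffices --- the exact equality $d_2(T^m)=d_2(T)^m$, while true, need not be verified separately.
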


\begin{proof}
Let $M$ be a minimal normal subgroup of $G$. As $G$ has a trivial solvable radical, $M\cong S^k$, where $S$ is a non-abelian simple group and $k\ge 1$ is an integer. By Lemma \ref{lem1}(2), we have $1/2=d_2(G)\leq d_2(G/M)d_2(M)\leq d_2(M)$. By applying this lemma repeatedly, we have $1/2\leq d_2(M)\leq d_2(S)^k$. By Lemma \ref{lem4}, $d_2(S)\leq 1/2$; so $1/2\leq d_2(S)^k\leq (1/2)^k$, forcing $k=1$ and $d_2(S)=1/2$.

Let $C=\Centralizer_G(M)$. Then $C\unlhd G$ and $CM=C\times M\unlhd G.$ By Lemma \ref{lem1}(2),  $$1/2=d_2(G)\leq d_2(G/MC)d_2(MC)\leq d_2(MC)\leq d_2(M)d_2(C)=d_2(C)/2.$$ Hence $d_2(C)=1$  and so $C$ is  solvable Corollary \ref{cor}(2) and Feit-Thompson theorem. Since $\Sol(G)=1$ and $C$ is a solvable normal subgroup of $G$, we must have $C=1$ so $G$ is almost simple with simple socle $M$.
\end{proof}

\begin{lem}\label{lem:reduction}
Let $G$ be a finite perfect  group.  Suppose that $\OO_{2'}(G)=1$ and $d_2(G)=1/2$. Then
$G\cong \PSL_2(q)$ with $3<q\equiv 3,5$ $({mod}\: 8)$, where $q$ is a prime power.

\end{lem}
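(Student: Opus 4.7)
I would prove this by induction on $|G|$. The overall plan is: (i) show $G$ is non-solvable, (ii) show $\Sol(G) = 1$, (iii) apply Lemmas~\ref{lem:almost simple} and~\ref{lem: simple groups} to pin $G$ down, and (iv) use perfectness together with the structure of $\Out(\PSL_2(q))$ to conclude that $G$ equals its socle.

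Step (i) is immediate, since a perfect solvable group is trivial while $d_2(1) = 1 \neq 1/2$. For step (ii), assume $R = \Sol(G) \neq 1$ for contradiction. Lemma~\ref{lem1}(2), together with Theorem~A and the fact that $\OO_{2'}(G/R) \leq \Sol(G/R) = 1$, quickly force $d_2(G/R) = 1/2$ and $d_2(R) = 1$: indeed, $d_2(G/R) > 1/2$ would make $G/R$ a $2$-group and hence trivial by perfectness, returning us to the solvable case. Corollary~\ref{cor} applied to $R$ (with $\OO_{2'}(R) = 1$) then forces $R$ to be an abelian $2$-group, and the inductive hypothesis applied to the strictly smaller $G/R$ (which is perfect, $\OO_{2'}$-trivial, and has $d_2 = 1/2$) gives $G/R \cong \PSL_2(q)$ for some $3 < q \equiv 3,5 \pmod{8}$.

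The heart of the argument is then the conjugation action of $G$ on the abelian normal $2$-subgroup $R$. Let $K = \Centralizer_G(R) \supseteq R$; since $G/R$ is simple, $K$ is either $G$ or $R$. When $K = G$, we have $R \leq \Center(G)$ and $G$ is a perfect central extension of $\PSL_2(q)$ by the $2$-group $R$; the $2$-part of the Schur multiplier of $\PSL_2(q)$ for such $q$ being $C_2$, we obtain $G \cong \SL_2(q)$. But $\SL_2(q)$ has exactly three classes of $2$-elements (namely $1$, $-I$, and one class of elements of order $4$) against $|P|=8$, giving $d_2(\SL_2(q)) = 3/8 \neq 1/2$, a contradiction. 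When $K = R$, the group $G/R$ acts faithfully on $R$. A parallel application of the induction to $G/\Center(G)$ forces $\Center(G) = 1$ (otherwise one would obtain $G \cong \SL_2(q')$, whose solvable radical $\Center(G)$ has order $2$ and is centralized by $G$, contradicting $K = R$). With $\Center(G) = \OO_{2'}(G) = 1$, Lemma~\ref{lem4} restricts $P$ to be elementary abelian of order $4$ or extra-special of order $2^{1+2m}$. The identity $|P| = 4\,|R|$ (Sylow $2$-subgroups of $\PSL_2(q)$ have order $4$) together with the bound $|R| \leq 2^{m+1}$ for abelian subgroups of an extra-special $2^{1+2m}$ force $m \leq 2$, and the residual small cases ($|R| \in \{1, 2, 8\}$) are each eliminated since $|\Aut(R)| \leq |\GL_3(2)| = 168$ is too small to contain a faithful copy of $\PSL_2(q)$ of order at least $60$.

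Once $\Sol(G) = 1$, Lemma~\ref{lem:almost simple} shows that $G$ is almost simple and Lemma~\ref{lem: simple groups} identifies the socle $S \cong \PSL_2(q)$ with the required congruence on $q$. Finally, $G/S$ embeds in $\Out(\PSL_2(q)) \cong C_2 \times C_f$, which is abelian, while $G/S$ is a quotient of the perfect group $G$; a perfect abelian group is trivial, so $G = S \cong \PSL_2(q)$. The main obstacle throughout is the $K = R$ branch of the action analysis: coordinating the Sylow-order identity, the abelian-subgroup bound inside an extra-special $2$-group, and the order-theoretic exclusion of $\PSL_2(q)$ from $\Aut(R)$ requires careful attention to the possible isomorphism types of an abelian $2$-group of order $8$ sitting inside an extra-special $2$-group of order $32$.
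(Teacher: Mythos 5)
Your proof is correct, and its skeleton coincides with the paper's: both pass to $U=\Sol(G)$, show $U$ is an abelian $2$-group with $G/U\cong\PSL_2(q)$, dispose of the case $U\leq\Center(G)$ via the Schur multiplier (forcing $G\cong\SL_2(q)$ and the contradiction $d_2(\SL_2(q))=3/8$), and are then left to rule out $\Centralizer_G(U)=U$. Only in this last step do you diverge. The paper exploits the finer fusion data of Lemma~\ref{lem4}(2): since $U\unlhd G$ cannot be exhausted by the class $z^G$ of size $3$ (else $|U|=4$ and $G/U\hookrightarrow\GL_2(2)$), there is $1\neq y\in U$ with $y^G=y^G\cap P$ of size $2$, so $\Centralizer_G(y)/U$ is an index-$2$ subgroup of the simple group $G/U$, a contradiction. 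You instead use only the coarse conclusion of Lemma~\ref{lem4} (that $P$ is elementary abelian of order $4$ or extra-special of order $2^{1+2m}$), combine $|P|=4|U|$ with the bound $2^{m+1}$ on abelian subgroups of an extra-special group of order $2^{1+2m}$ to force $|U|\leq 8$, and finish with $\PSL_2(q)\cong G/U\hookrightarrow\Aut(U)$. Both endgames are sound; the paper's is shorter and independent of the isomorphism type of $U$, while yours has the mild advantage of not needing the class-size statements in Lemma~\ref{lem4}(2). Two points of polish: the exclusion of $\Aut(U)$ should be justified by Lagrange rather than by calling $168$ ``too small'' (indeed $|\GL_3(2)|=168>60$; the real point is that $|\PSL_2(q)|$ must divide $|\Aut(U)|\leq 168$, and the only such order at least $60$ is $168$ itself, realized by $\PSL_2(7)$ with $7\not\equiv 3,5 \pmod 8$). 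Also, when ruling out $d_2(G/R)>1/2$, state explicitly that Theorem~A gives a normal $2$-complement which lies in $\OO_{2'}(G/R)=1$, so $G/R$ is a perfect $2$-group, hence trivial, contradicting the non-solvability of $G$.
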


\begin{proof} 
Let $U$ be the solvable radical of $G.$ Then $G/U$ is non-solvable. Since $1/2=d_2(G)\leq d_2(U)d_2(G/U)$ by Lemma \ref{lem1}, both $d_2(U)$ and $d_2(G/U)$ are at least $1/2$. By Theorem A,  $d_2(G/U)=1/2$ as otherwise $G/U$ is solvable.  By Lemmas \ref{lem: simple groups} and \ref{lem:almost simple} and the fact that $G$ is perfect, $G/U\cong\PSL_2(q)$, where $q\equiv 3,5$ (mod $8$). We have $d_2(U)=1$ and since $\OO_{2'}(G)=1$, $U$ is an abelian $2$-group. We will show that $G$ is non-abelian simple by induction on $|G|$.

 If $U=1$, then $G$ is simple and we are done. Assume that $U$ is nontrivial. Assume first that $Z:=\Center(G)$ is nontrivial. Then $Z$ must be a $2$-group. Consider the quotient group $G/Z.$ Observe that $G/Z$ is perfect, $d_2(G/Z)=1/2$ and $\OO_{2'}(G/Z)=1$. Since $|G/Z|<|G|$, by induction hypothesis  $G/Z$ is non-abelian simple and thus $Z=U$. It follows that $G\cong\SL_2(q)$, the only Schur cover of $\PSL_2(q)$ with $q\equiv 3,5$ (mod $8$).  However, it is easy to see that $\SL_2(q)$ has only two classes of nontrivial $2$-elements and the Sylow $2$-subgroup of $\SL_2(q)$  with $q\equiv 3,5$ (mod $8$) has order $8$, so $d_2(G)=3/8<1/2,$ which is a contradiction. Hence we may assume that $\Center(G)=1$. Since $U$ is a normal abelian subgroup of $G$, we have $U\leq \Centralizer_G(U)\unlhd G$. Since $U$ is not central in $G$ and $G/U$ is non-abelian simple,  we must have that $U=\Centralizer_G(U)$.

Let $P\in{\Syl}_2(G)$. Note that the hypothesis of Lemma \ref{lem4} holds for $G$, that is, $\OO_{2'}(G)=\Center(G)=1$, $|P|>1$ and that $d_2(G)=1/2$. We claim that $k_2(G)>2$. Assume by contradiction that $k_2(G)=2$. Then $P$ is elementary abelian of order $4$ by Lemma \ref{lem4}. However the Sylow $2$-subgroup of $\PSL_2(q)$ with $q\equiv 3,5$ (mod $8$) has order $4$. So $U=1$, which is a contradiction. Therefore $k_2(G)>2$ and so part (2) of Lemma \ref{lem4} holds. Obviously $|U|\ge 4$ and $P'=\langle z\rangle=\Center(P) < U.$ If $z^G=U\setminus\{1\}$, then $U$ is elementary abelian of order $4$ and thus $G/U$ embeds into $\GL_2(2)$ which is impossible. Thus there exists $1\neq y\in U\setminus z^G$ and so $|y^G\cap P|=2$. Since $y\in U\unlhd G,$ we have $y^G\subseteq U\leq P$, so $|y^G\cap P|=|y^G|=2$ which implies that $U\leq \Centralizer_G(y)<G$ and $|G:\Centralizer_G(y)|=2$. Therefore, $G/U$ has a subgroup of index $2$ which is impossible as $G/U$ is non-abelian simple.
\end{proof}

\begin{proof}[\textbf{Proof of Theorem B}]
Let $G$ be a finite group and assume that $d_2(G)=1/2$ and $\OO_{2'}(G)=1$.
If $G$ is solvable, then $G/\Center(G)\cong\Alt_4$ or $\Sym_4$ by Lemma \ref{lem: solvable groups}. So part (1) of the theorem holds. Assume that $G$ is non-solvable. Let $L$ be the last term of the derived series of $G$. By Theorem A and Lemma \ref{lem1}(2), $d_2(L)=1/2$. Moreover $L$ is perfect and $\OO_{2'}(L)=1$. By Lemma \ref{lem:reduction}, $L\cong S$ where $S=\PSL_2(q)$ $q\equiv 3,5$ (mod $8$). Write $q=p^f$, where $p$ is a prime and $f\ge 1$ is an integer. We see that $f$ must be odd and thus $\Out(S)=C_2\times C_f$.

Let $C=\Centralizer_G(L)$. Then $C\unlhd G$, $C\cap L=1$ and $G/C$ is an almost simple group with socle isomorphic to $S$. Since $d_2(L)=1/2$, we see that $d_2(C)=1$ and since $\OO_{2'}(G)=1$, $C$ is a normal abelian $2$-subgroup of $G$. We also have that $1/2=d_2(G)\leq d_2(G/L)d_2(L)=d_2(G/L)/2$ so $d_2(G/L)=1$ and so $G/L$ has a normal $2$-complement $W/L$ and an abelian Sylow $2$-subgroup $PL/L$ by Corollary \ref{cor}, where $P$ is any Sylow $2$-subgroup of $G$ containing $C$. Since $CL/L$ and $W/L$ are normal subgroups of $G/L$ and have coprime orders, we deduce that $[C,W]\leq L$. As $C\unlhd G,$ we have $[C,W]\leq L\cap C=1$. Thus $[C,W]=1$. On the other hand, $PL/L$ is abelian, thus $[C,P]\leq L$. With the same reasoning, we have $[C,P]\leq L\cap C=1$. Since $G=PW,$ we obtain that $[C,G]=1$. In particular, $C\leq \Center(G)$ and since $G/C$ is almost simple, we must have that $C=\Center(G)$. Therefore, we have shown that $G/\Center(G)$ is almost simple with socle $S$ as required.
\end{proof}

We will need the following result for our proof of Theorem C.

\begin{lem}\label{lem:odd order} Let $G$ be a finite group of odd order and let $\sigma$ be a non-empty set of primes. If $d_\sigma(G)\ge 1/2$, then $G$ has a normal $\sigma$-complement and an abelian Hall $\sigma$-subgroup.
\end{lem}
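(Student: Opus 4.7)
The plan is to induct on $|G|$, leveraging the Feit--Thompson theorem: $G$ is solvable, so Hall $\sigma$- and $\sigma'$-subgroups exist and all Hall $\sigma$-subgroups are conjugate. First I would show that a Hall $\sigma$-subgroup $H$ of $G$ is abelian. Every $\sigma$-element of $G$ is conjugate into $H$, so one may choose $G$-conjugacy class representatives inside $H$, giving $k_\sigma(G)\le k(H)$ and hence $d(H)\ge d_\sigma(G)\ge 1/2$. Since $|H|$ is odd, cases~(ii) and~(iii) of Lemma~\ref{lem: classification} are unavailable, so $H$ falls under case~(i) and is abelian.

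Now take $G$ to be a minimal counterexample. If $U=\OO_{\sigma'}(G)\ne 1$, then $d_\sigma(U)=1$ and Lemma~\ref{lem1}(2) yields $d_\sigma(G/U)\ge d_\sigma(G)\ge 1/2$; by induction $G/U$ has a normal $\sigma$-complement $L/U$, so $L\unlhd G$ is a normal Hall $\sigma'$-subgroup of $G$, contradicting that $G$ is a counterexample. Hence $\OO_{\sigma'}(G)=1$, forcing $\Fit(G)=V:=\OO_\sigma(G)\le H$; combining $\Centralizer_G(\Fit(G))\le \Fit(G)$ with the abelianness of $H$ gives $H\le \Centralizer_G(V)=V$, so $H=V\unlhd G$. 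Schur--Zassenhaus yields $G=H\rtimes L$ with a Hall $\sigma'$-subgroup $L$ acting faithfully on $H$. As $H$ is an abelian normal Hall $\sigma$-subgroup, every $\sigma$-element of $G$ lies in $H$ and $G$-conjugation on $H$ coincides with $L$-conjugation, so $k_\sigma(G)$ equals the number of $L$-orbits on $H$, which is $\ge |H|/2$ by hypothesis.

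The main obstacle is the last step: deducing $L=1$ from this orbit inequality. Pick $x\in L$ of prime order $p$, necessarily odd. Faithfulness of $L$ passes to $\langle x\rangle$, and Maschke's theorem for coprime action on the abelian group $H$ gives $H=\Centralizer_H(x)\oplus[H,x]$ with $\langle x\rangle$ fixing only $0$ in $[H,x]$. Writing $f=|\Centralizer_H(x)|$, the number of $\langle x\rangle$-orbits on $H$ is $f+(|H|-f)/p$; since $L$-orbits are coarser than $\langle x\rangle$-orbits this quantity must be $\ge |H|/2$, which rearranges to $|[H,x]|=|H|/f\le 2(p-1)/(p-2)$. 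Yet $|[H,x]|$ divides $|H|$ (so is odd), is $>1$ by faithfulness, and satisfies $|[H,x]|\equiv 1\pmod{p}$ (since all non-zero $\langle x\rangle$-orbits on $[H,x]$ have size $p$), giving $|[H,x]|\ge 1+p$. For $p=3$ the two bounds $|[H,x]|\le 4$ and $|[H,x]|\ge 4$ force $|[H,x]|=4$, contradicting oddness; for $p\ge 5$, $|[H,x]|\le 2(p-1)/(p-2)<3$ while $|[H,x]|\ge 1+p\ge 6$, again impossible. Hence $L=1$, making $G=H$ abelian and contradicting the choice of counterexample.
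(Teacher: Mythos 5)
Your proof is correct, but it takes a genuinely different route from the paper's. The paper also starts from Feit--Thompson and induction, but its reduction goes through a minimal normal subgroup $N$: after arranging $\OO_{\sigma'}(G)=1$, that $G/N$ is a $\sigma'$-group, and then that $|G/N|=r$ is prime, it identifies $G$ as a Frobenius group $N\rtimes C_r$ with $N$ elementary abelian of order $p^k$, computes $d_p(G)=1/r+(r-1)/(rp^k)$, and rules this out by a three-case numerical check. You instead prove the abelianness of the Hall $\sigma$-subgroup $H$ up front (the paper only obtains it after the normal complement exists, via $G/K\cong H$), use $\Centralizer_G(\Fit(G))\le\Fit(G)$ to force $H=\Fit(G)\unlhd G$, and then run an orbit count for an element $x$ of odd prime order $p$ in the complement $L$, playing the bound $|[H,x]|\le 2(p-1)/(p-2)$ off against $|[H,x]|\equiv 1\pmod p$ and oddness. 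This buys a cleaner endgame --- a single congruence/parity contradiction in place of the Frobenius analysis and case split --- and it treats general $\sigma$ uniformly rather than collapsing to $\sigma=\{p\}$ at the last step, as the paper does. One small repair: the identity $\Fit(G)=\OO_\sigma(G)$ is not justified a priori, since a normal $\sigma$-subgroup need not be nilpotent; but your argument only needs $\Fit(G)\le H$, which holds because $\OO_{\sigma'}(G)=1$ makes $\Fit(G)$ a normal $\sigma$-subgroup and every normal $\sigma$-subgroup of a solvable group lies in every Hall $\sigma$-subgroup, so the rest goes through unchanged.
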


\begin{proof} By Feit-Thompson theorem, we know that $G$ is solvable. By Lemma \ref{lem1}(2),  if $N\unlhd G$, then $d_\sigma(N)\ge 1/2$ and $d_\sigma(G/N)\ge 1/2$. Assume that $G$ has a  normal $\sigma$-complement $K$. Let $H$ be a Hall $\sigma$-subgroup of $G$. We claim that $H$ is abelian. As $G/K\cong H$,  we have $1/2\leq d_{\sigma}(H)=d(H)$, where the last equality holds as $H$ is a $\sigma$-group.
Thus $d(H)\ge 1/2$ where $H$ is a group of odd order. By Lemma \ref{lem: classification}, $H$ must be abelian as wanted. Therefore, it suffices to show that $G$ has a normal $\sigma$-complement.  We will prove this claim by induction on $|G|$.

Let $N$ be a minimal normal subgroup of $G$.  Then $N$ is an elementary abelian $p$-subgroup for some odd prime $p$.  As $d_{\sigma}(G/N)\ge 1/2$, by induction on $|G|$, $G/N$ has a normal $\sigma$-complement, say $M/N$. If $p\not\in \sigma$, then $M$ is also a normal $\sigma$-complement of $G$, and we are done. Thus we may assume that $\OO_{\sigma'}(G)=1$ and  $p\in\sigma.$
We have $M\unlhd G$ and $d_\sigma(M)\ge 1/2$. Therefore, by induction again, $M$ has a normal $\sigma$-complement whenever $M<G;$ but then this would imply that $M$ is a $\sigma$-subgroup since $\OO_{\sigma'}(M)\subseteq \OO_{\sigma'}(G)=1$ and hence $G$ is a $\sigma$-group.
So, we can assume $M=G$, hence $G/N$ is a $\sigma'$-group.
 
Since $G/N$ is solvable, let $T/N$ be a maximal normal subgroup of $G/N$ of prime index $r\not\in \sigma$. Since $T\unlhd G$, we have $d_\sigma(T)\ge1/2$ and again by induction,  $T$ has a normal $\sigma$-complement which implies that $T=N$. Thus  $N$ is a maximal normal subgroup of $G$ and $|G/N|=r$ is a prime different from $p$.

If $\Centralizer_G(x)=G$ for some $1\neq x\in N$, then $\langle x\rangle=N\leq \Center(G)$ and $G\cong C_p\times C_r$ by Lemma \ref{lem: fusion}(2), which is a contradiction as $\OO_{\sigma'}(G)=1$. So, we may assume that $\Centralizer_G(x)<G$ for all $1\neq x\in N$. Since $N$ is maximal in $G$,  $\Centralizer_G(x)=N$ for every $1\neq x\in N$. Thus $G$ is a Frobenius group with Frobenius kernel $N$ and Frobenius complement isomorphic to $C_r$. Set $|N|=p^k$ for some integer $k\ge 1.$ We can see that $\sigma=\{p\}$ and that $k_p(G)=(p^k-1)/r+1$ and so $d_p(G)={1}/{r}+{(r-1)}/{{(rp^k)}}\ge {1}/{2}.$
Notice that $r\neq p\ge 3$ and $r\mid p^k-1$. We consider the following cases:

(1) $r=3$ and $p\ge 5$. In this case, we have $$d_p(G)\leq {1}/{3}+{2}/{3}\cdot {1}/{5}={7}/{15}<{1}/{2}.$$

(2) $r\ge 5$ and $p=3$. Since $r>p$, $k\ge 2.$ We have $$d_p(G)\leq {1}/{5}+ {1}/{9}={14}/{45}<{1}/{2}.$$

(3) $r\ge 5$ and $p\ge 5.$ Clearly, we have $$d_p(G)< {1}/{5}+ {1}/{5}={2}/{5}<{1}/{2}.$$

Thus we have shown that $G$ has a normal $\sigma$-complement as wanted.
\end{proof}

We are now ready to prove Theorem C.

\begin{proof}[\textbf{Proof of Theorem C}]
Let $G$ be a finite group and let $\pi$ be a set of primes containing $2$ and let $\sigma=\pi\setminus \{2\}$. Suppose that $d_\pi(G)>1/2.$
By Lemma \ref{lem1}(1), we have $1/2<d_\pi(G)\leq d_2(G)$ and thus by Theorem A, $G$ has a normal $2$-complement $K$ and by Lemma \ref{lem1}(2), we have $$1/2<d_\pi(G)\leq d_\pi(K)d_\pi(G/K)\leq d_\pi(K)=d_\sigma(K).$$ By Lemma \ref{lem:odd order} , $K$ has a normal $\sigma$-complement, say $N$ and an abelian Hall $\sigma$-subgroup $T$. It follows that $G=PTN$, where $N$ is also a normal $\pi$-complement of $G$.
\end{proof}

\section{Conjugacy classes of $p$-elements with $p$ odd}\label{sec4}
We now consider odd primes. We start with the following easy result.
\begin{lem}\label{lem:p-groups} Let $p$ be an odd prime. Let $G$ be a finite group and let $P$ be a Sylow $p$-subgroup of $G$. If $d_p(G)\ge (p+1)/(2p)$, then $P$ is abelian. 
\end{lem}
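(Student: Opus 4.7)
The plan is to combine two facts already established in the excerpt: the inequality $d_p(G)\le d(P)$ (the remark immediately following Corollary~1.5, or equivalently Corollary~1.5(1)), and the upper bound on the commuting probability of a non-abelian $p$-group supplied by Lemma~2.7(3).

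First, I would invoke Corollary~1.5(1), which tells us that $d_p(G)\le d(P)$. Combined with the hypothesis $d_p(G)\ge (p+1)/(2p)$, this immediately yields
\[
d(P)\;\ge\;\frac{p+1}{2p}.
\]
Next, suppose for contradiction that $P$ is non-abelian. Then Lemma~2.7(3) applies to $P$ and gives
\[
d(P)\;<\;\frac{p+1}{p^{2}}.
\]
Putting the two inequalities together,
\[
\frac{p+1}{2p}\;\le\;d(P)\;<\;\frac{p+1}{p^{2}},
\]
which simplifies to $p^{2}<2p$, i.e.\ $p<2$. Since $p$ is an odd prime, $p\ge 3$, a contradiction. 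Hence $P$ must be abelian.

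There is essentially no obstacle here: the argument is a one-line comparison of constants, and the only thing one has to notice is that for odd $p$ (in fact for any $p\ge 3$) the Gustafson-type bound $(p+1)/p^{2}$ for non-abelian $p$-groups is strictly smaller than the threshold $(p+1)/(2p)$ appearing in the hypothesis, which is exactly what creates the incompatibility.
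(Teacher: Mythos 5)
Your proof is correct and is essentially identical to the paper's own argument: both apply Corollary \ref{cor}(1) to get $d(P)\ge (p+1)/(2p)$ and then contradict the bound $d(P)<(p+1)/p^2$ from Lemma \ref{lem1}(3) using $p\ge 3$. Nothing to add.
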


\begin{proof}
Let $G$ be a finite group  such that $d_p(G)\ge (p+1)/(2p).$ By Corollary \ref{cor}, we have $d_p(G)\leq d(P)$ which implies that $d(P)\ge (p+1)/(2p)$. If $P$ is abelian, then we are done. So, assume that $P$ is non-abelian. By Lemma \ref{lem1}(3), we have $d(P)<(p+1)/p^2.$ Since $p$ is odd, we can check that $(p+1)/(2p)>(p+1)/p^2$ and so $d(P)<(p+1)/p^2<(p+1)/(2p)\leq d(P)$, which is a contradiction. 
\end{proof}

\begin{proof}[\textbf{Proof of Theorem D}]
Let $p$ be an odd prime. Let $G$ be a finite group. 
Assume that $G$ has a normal $p$-complement and an abelian Sylow $p$-subgroup $P$. By Corollary \ref{cor}(2), we have $d_p(G)=d(P)=1>(p+1)/(2p)$. Conversely, assume that $d_p(G)>(p+1)/(2p)$ and  let $P\in{\Syl}_p(G)$.
By Lemma \ref{lem:p-groups}, $P$ is abelian. It remains to show that $G$ has a normal $p$-complement. We proceed by using induction on $|G|$.

We first claim that $\OO_{p'}(G)=1$. Assume by contradiction that $\OO_{p'}(G)$ is nontrivial. By Lemma \ref{lem1}(2), $d_p(G)\leq d_p(G/\OO_{p'}(G))d_p(\OO_{p'}(G))\leq d_p(G/\OO_{p'}(G))$, so by induction $G/\OO_{p'}(G)$ has a normal $p$-complement; hence $G$ will have a normal $p$-complement. Thus we may assume that $\OO_{p'}(G)=1.$

We next claim that $G=\OO^p(G)$. Indeed, if $N=\OO^p(G)$ is a proper subgroup of $G$, then $(p+1)/(2p)<d_p(G)\leq d_p(N)$; thus by induction again, $N$ has a normal $p$-complement $\OO_{p'}(N)$. Clearly, this is also a normal $p$-complement of $G$. 

We now show that $G$ is $p$-solvable. In fact, suppose that $G$ is not $p$-solvable and let $M/N$ be a non-abelian chief factor of $G$ with $p$ dividing $|M/N|$. There exists a non-abelian simple group $S$ and an integer $k\ge 1$ such that $M/N\cong S^k.$ By applying Lemma \ref{lem1}(2) repeatedly, 
we have $(p+1)/2p<d_p(S)^k\leq d_p(S).$ (Note that $p$ divides $|S|$.)
Let $T\in{\Syl}_p(S)$ and let $H=\Normalizer_S(T)$. Clearly $T$ is abelian, so by Lemma \ref{lem: fusion}(1), $H$ controls $S$-fusion in $T.$ Thus $x^S\cap T=x^H\subseteq T$ for every $x\in T.$ Since $S$ is non-abelian simple, $\Center_p^*(S)=1$. Now Lemmas \ref{lem: controls of p-fusion} and \ref{lem: power up} together with  Glauberman $Z_p^*$-theorem implies that  $|x^S\cap T|\ge 2$ for all $1\neq x\in T$. It follows that $|T|-1\ge 2(k_p(S)-1)$. This implies that $k_p(S)\leq (|T|+1)/2$ and hence $$(p+1)/2p<d_p(S)\leq (|T|+1)/(2|T|)\leq (p+1)/(2p)$$ as $|T|\ge p.$ This contradiction shows that $G$ is $p$-solvable.

By Hall-Higman Lemma 1.2.3 (\cite[Lemma 3.21]{Isaacs}) and the fact that $P$ is abelian, we have $P\leq \Centralizer_G(\OO_p(G))\leq \OO_p(G)$, so $P=\OO_p(G)\unlhd G.$ Let $P/N$ be a chief factor of $G$. Assume that $N$ is nontrivial. Then $(p+1)/(2p)<d_p(G)\leq d_p(G/N)$ and so by induction $G/N$ has a normal $p$-complement $K/N.$ However, as $G=\OO^p(G)$, $G=K$ which is impossible. So, we can assume that $P$ is an elementary abelian minimal normal $p$-subgroup of $G$.

If $|x^G|=1$ for some $1\neq x\in P$, then $x\in \Center(G)\cap P$ which forces $P=\langle x\rangle\subseteq \Center(G)$. In this case $G$ has a normal $p$-complement by Lemma \ref{lem: fusion}(2). Hence we can also assume that $|x^G|\ge 2$ for all $1\neq x\in P$ whence $k_p(G)\leq (|P|+1)/2$. Since $d_p(G)>(p+1)/(2p)$,  $$|P|(p+1)/(2p)<(|P|+1)/2.$$ However, this inequality cannot occur as $|P|\ge p.$
\end{proof}

\begin{proof}[\textbf{Proof of Theorem E}] 
Let $\pi$ be a non-empty set of odd primes and let $p$ be the smallest member in $\pi$. Let $G$ be a finite group with $d_\pi(G)>(p+1)/2p.$ For every $ r\in\pi$, we see that $$(r+1)/(2r)\leq(p+1)/(2p)<d_\pi(G)\leq d_r(G)$$ by Lemma \ref{lem1}(1), so $d_r(G)>(r+1)/(2r)$. By Theorem D, $G$ has a normal $r$-complement and an abelian Sylow $r$-subgroup. It follows that $G$ has a normal $\pi$-complement $N=\OO_{\pi'}(G)\unlhd G$ and $G$ is $\pi$-solvable. By \cite[Theorem 3.20]{Isaacs}, $G$ has a Hall $\pi$-subgroup $H$. Clearly $G=HN$ and $G/N\cong H.$ Since $$(p+1)/(2p)<d_\pi(G)\leq d_\pi(H)d_\pi(N)\leq d_\pi(H)=d(H),$$ we deduce that $$d(H)>(p+1)/(2p)>1/p\ge 1/r$$ and so by Lemma \ref{lem1}(4), $H$ has a normal Sylow $r$-subgroup. It follows that $H$ is nilpotent and thus $H$ is abelian.
\end{proof}

\begin{proof}[\textbf{Proof of Theorem F}] 
 Let $G$ be a finite group with a Sylow $p$-subgroup $P$, where $p$ is an odd prime. Suppose that $d_p(G)=(p+1)/(2p)$ and $\OO_{p'}(G)=1$. By Lemma \ref{lem:p-groups}, $P$ is abelian and thus by Lemma \ref{lem: fusion}(1), $d_p(\Normalizer_G(P))=d_p(G)=(p+1)/(2p).$ By Theorem 7.4.4 in \cite{Gorenstein}, we have $P=P\cap N'\times P\cap \Center(N)$, where $N=\Normalizer_G(P).$ Set $Z=P\cap\Center(N)$ and $U=P\cap N'.$ We have that $k_p(N)\leq (|P|+|Z|)/2$ as for any $x\in P\setminus Z,$ $|x^N|\ge 2.$ It follows that $$(p+1)/(2p)\leq (|P|+|Z|)/(2|P|)$$ and thus $|P:Z|\leq p$. Clearly, $|P:Z|>1$ as otherwise $P\subseteq \Center(N)$ and thus $G$ has a normal $p$-complement which forces $G=P$ since $\OO_{p'}(G)=1$. But then $d_p(G)=1$, a contradiction. Thus $|P:Z|=p$; hence $|U|=p$ and $|Z|=|P|/p.$ Moreover $|x^N|=2$ for every $x\in P\setminus Z.$ Set $U=\langle y\rangle$. Then $\Centralizer_G(P)=\Centralizer_N(y)\unlhd N$. As $|y^N|=2$, we have $|N:\Centralizer_G(P)|=2$. Since $U=P\cap N'$ has order $p$, we see that $U=[P,N]$. Furthermore, by Theorem 7.4.4 in \cite{Gorenstein}, $P\cap G'=P\cap N'=U$ is cyclic of order $p$ and by Theorem 5.18 in \cite{Isaacs}, $G'\cap Z=1.$ 

Let $\Fit^*(G)$ be the generalized Fitting subgroup of $G$. Then $\Fit^*(G)=\Fit(G)\Comp(G)$ is the central product of the Fitting subgroup $\Fit(G)$ and the layer $\Comp(G)$ of $G$, which is the product of all components of $G$, that is,  subnormal quasi-simple subgroups of $G$. Bender's theorem (\cite[Theorem 9.8]{Isaacs}) says that $\Centralizer_G(\Fit^*(G))\subseteq \Fit^*(G)$.

Assume first that $\Comp(G)=1$. Then $\Fit^*(G)=\Fit(G)$ is a $p$-group since $\OO_{p'}(G)=1$. As $\Centralizer_G(\Fit(G))\subseteq \Fit(G)$ and $P$ is abelian, $P=\Fit(G)$  and thus  $P=\Centralizer_G(P)\unlhd G$. It follows that $|G:P|=2$ and $G'=[G,P]=U$ is cyclic of order $p$; moreover $Z=\Center(G)\cap P=\Center(G)$ as $P$ is self-centralizing. Now $G/G'$ is an abelian group of order $2|Z|$. Hence $G/G'$ has a normal Sylow $2$-subgroup $A/G'$ and a normal Sylow $p$-subgroup $P/G'=ZG'/G'$, so $G/G'=A/G'\times ZG'/G'$ which implies that $G=Z\times A$, where $A$ is a nonabelian group of order $2p$ and it has a normal cyclic Sylow $p$-subgroup of order $p$. It is easy to see that $A\cong \textrm{D}_{2p}$, the dihedral group of order $2p$.

 Assume now that $E:=\Comp(G)$ is nontrivial. Since $G'\cap P$ is cyclic of order $p$, the center  of $E$ is either trivial or cyclic of order $p.$ If $|\Center(E)|=p$, then $E/\Center(E)$ is a $p'$-group which is impossible by Corollary 5.4 in \cite{Isaacs}. Thus $\Center(E)=1$. Hence $E$ has a Sylow $p$-subgroup of order $p$ which forces $E$ to be a non-abelian simple group. Now we have that $\Fit^*(G)=E\times F$ where $F=\Fit(G)\leq P$ is a $p$-subgroup. Since $E\cap P\leq G'\cap P=U$ is of order $p$, we have $U=E\cap P=G'\cap P$. Therefore, $F\cap G'\leq F\cap G'\cap P=E\cap P\cap F=1$, so $F\cap G'=1$ whence $F\leq \Center(G)$.
 Hence $\Centralizer_G(E)=F=\Center(G)$ and so $G/F$ is an almost simple group with socle isomorphic to $E$. Since $E$ has a cyclic Sylow $p$-subgroup of order $p$, we deduce from Lemma 2.3 in \cite{Tongviet} that $|\Out(E)|$ is prime to $p$. In particular, $G/EF$ is a solvable $p'$-group. Thus $G/E$ has a central Sylow $p$-subgroup $EF/E\cong F.$ By Lemma \ref{lem: fusion}(2), $G/E$ has a normal $p$-complement $A/E$ and $G/E=EF/E\times A/E$. Since $E\cap F=1$, we have $G=A\times F$ and so $A$ is almost simple with socle $E$.
\end{proof}


\end{document}